\numberwithin{equation}{section}
\newtheorem{theorem}{Theorem}[section]
\newtheorem{lemma}[theorem]{Lemma}
\theoremstyle{definition}
\newtheorem{remark}[theorem]{Remark}
\newcommand{\R}{\mathbb{R}}
\newenvironment{altproof}[1]
{\noindent
{\em Proof of {#1}}.}
{\nopagebreak\mbox{}\hfill $\Box$\par\addvspace{0.5cm}}
\newcommand{\cE}{{\mathcal E}}
\newcommand{\cI}{{\mathcal I}}
\newcommand{\cN}{{\mathcal N}}
\newcommand{\cS}{{\mathcal S}}
\newcommand{\Ga}{\Gamma}
\newcommand{\weakto}{\rightharpoonup}
\numberwithin{equation}{section}
\DeclareMathOperator*{\essinf}{ess\,inf}
\DeclareMathOperator*{\supp}{supp}
\DeclareSymbolFont{rsfs}{U}{rsfs}{m}{n}
\DeclareSymbolFontAlphabet{\mathscr}{rsfs}
\begin{document}

\title{The semirelativistic Choquard equation with a local nonlinear term}

\author{Bartosz Bieganowski\thanks{Email address: \texttt{bartoszb@mat.umk.pl}}}
\affil{Nicolaus Copernicus University, Faculty of Mathematics and Computer Science, ul. Chopina 12/18, 87-100 Toru\'n, Poland}

\author{Simone Secchi\thanks{Email address: \texttt{Simone.Secchi@unimib.it}}}
\affil{Dipartimento di Matematica e Applicazioni, Universit\`a degli Studi di Milano-Bicocca, via Roberto Cozzi 55, I-20125, Milano, Italy}

\maketitle

\begin{abstract}
  We propose an existence result for the semirelativistic Choquard equation with a local nonlinearity in $\mathbb{R}^N$
  \begin{equation*}
  \sqrt{\strut -\Delta + m^2} u - mu + V(x)u =  \left( \int_{\R^N} \frac{|u(y)|^p}{|x-y|^{N-\alpha}} \, dy \right) |u|^{p-2}u - \Ga (x) |u|^{q-2}u,
\end{equation*}
  where $m > 0$ and the potential $V$ is decomposed as the sum of a $\mathbb{Z}^N$-periodic term and of a bounded term that decays at infinity.
  The result is proved by variational methods applied to an auxiliary problem in the half-space $\mathbb{R}_{+}^{N+1}$.
   \medskip

   \noindent \textbf{Keywords:} solitary wave solution, ground state solution, variational methods, semirelativistic Choquard equation

   \noindent \textbf{AMS Subject Classification:}  35Q55, 35A15, 35J20, 35S05
   \end{abstract}

\tableofcontents

\section{Introduction}
\setcounter{section}{1}

The semirelativistic Hartree equation
\begin{equation*}
  \mathrm{i} \frac{\partial \psi}{\partial t} = \sqrt{\strut I-\Delta}\ \psi -V_\gamma(\psi)\psi \quad \hbox{in $\mathbb{R}\times \mathbb{R}^N$}
\end{equation*}
in the unknown $\psi=\psi(t,x)$, where
\begin{equation*}
  V_\gamma (\psi) (x)=  \left( |\cdot|^{-\gamma} * |\psi|^2 \right) (x)=  \int_{\mathbb{R}^N}
  \frac{|\psi(t,y)|^2}{|x -y|^\gamma}\, dy
\end{equation*}
appears as a model describing Boson stars, see \cite{Elgart,Frohlich,LiebYau}. For the simplicity of presentation,
the mass, speed of light and Planck constant have been normalized. Solitary wave solutions $\psi(t,x)=\mathrm{e}^{-\mathrm{i}t\lambda} u(x)$
lead to the non-local stationary equation
\begin{equation*}
\sqrt{\strut I-\Delta}\ u - V_\gamma(u)u = \lambda u \quad \hbox{in $\mathbb{R}^N$}.
\end{equation*}
In this paper we will consider the more general equation
\begin{equation}\label{eq1}
\sqrt{\strut -\Delta + m^2} u - mu + V(x)u =  \left( \int_{\R^N} \frac{|u(y)|^p}{|x-y|^{N-\alpha}} \, dy \right) |u|^{p-2}u - \Ga (x) |u|^{q-2}u,
\end{equation}
where $m > 0$ and $V = V_l + V_p \in L^\infty( \R^N)$ is decomposed so that $V_l$ is a bounded potential that vanishes at infinity,
while $V_p$ is a $\mathbb{Z}^N$-periodic function. We assume that \((N-1)p-N<\alpha<N\) and
$2 < q < \min \left\{2p, 2N/(N-1)\right\}$. We can rewrite \eqref{eq1} as
\[
\sqrt{\strut -\Delta + m^2} u + (V(x)-m)u =  \left( \int_{\R^N} \frac{|u(y)|^p}{|x-y|^{N-\alpha}} \, dy \right) |u|^{p-2}u - \Ga (x) |u|^{q-2}u.
\]
%
%
%
In the rest of the paper, we will write for simplicity
\[
I_\alpha = \frac{1}{|x|^{N-\alpha}}.
\]
In the case $\Gamma =0$, equation~\eqref{eq1} is also known as the Choquard-Pekard
or Schr\"{o}dinger-Newton equation and recently many papers have been devoted to the
study of solitary states and their semiclassical limit: see
\cite{a,CSM,CT,cho,ccs,ccs1,css,Cingolani,Frohlich,ls,mpt,mz,pe2,pe3,Secchi-ground,t,ww} and references therein.

\bigskip

In the rest of the paper, we will retain the following assumptions:
\begin{enumerate}
\item[(N)] \((N-1)p-N<\alpha<N\), $2 < q < \min \left\{2p, 2N/(N-1)\right\}$ and $p \geq 2$\color{black}.
\item[(V1)] The potential $V$ can be split as $V = V_p + V_l$, where
$V_p \in L^\infty (\mathbb{R}^N)$ is $\mathbb{Z}^N$-periodic and
$V_l \in L^\infty (\R^N)$ is such that either
$V_l \equiv 0$ or $V_l(x) < 0$ for a.e.~$x \in \R^N$, and
\[
\lim_{|x| \to \infty} V_l (x) = 0.
\]
Furthermore,
\begin{equation} \label{eq:1.5V}
V_l \in L^s (\mathbb{R}^N),
\end{equation}
where $N \leq s < + \infty$.
\item[(V2)] $\essinf_{x \in \R^N} V(x) > 0$.
\item[($\Ga$)] $\Ga \in L^\infty (\R^N)$ is $\mathbb{Z}^N$-periodic and non-negative.
\end{enumerate}

The main result of the paper is the following.
\begin{theorem} \label{th:main}
Suppose that (N), (V1), (V2) and ($\Gamma$) hold true. Then equation \eqref{eq1} has a
ground state solution $u \in H^{1/2}(\mathbb{R}^N) \cap C^\infty(\mathbb{R}^N)$.
\end{theorem}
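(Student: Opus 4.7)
The proof goes through the Caffarelli--Silvestre-type extension for the relativistic operator, followed by a Nehari-manifold argument and a comparison with the associated periodic problem. First I would associate to every $u \in H^{1/2}(\R^N)$ its $m$-harmonic extension $v \in \h$, namely the unique solution of $-\Delta v + m^2 v = 0$ in $\R^{N+1}_+$ with $v(\cdot,0) = u$; then $-\partial_y v(\cdot,0) = \sqrt{-\Delta+m^2}\,u$ in the trace sense, and \eqref{eq1} is equivalent to the boundary-value problem
\[
\begin{cases}
-\Delta v + m^2 v = 0 & \text{in } \R^{N+1}_+, \\[2pt]
-\partial_y v + (V-m)v = (I_\alpha * |v|^p)|v|^{p-2}v - \Gamma |v|^{q-2}v & \text{on } \R^N \times \{0\}.
\end{cases}
\]
Critical points of
\[
J(v) = \tfrac12\!\int_{\R^{N+1}_+}\!(|\nabla v|^2+m^2 v^2)\,dx\,dy + \tfrac12\!\int_{\R^N}(V-m)v^2\,dx - \tfrac{1}{2p}\!\int_{\R^N}(I_\alpha*|v|^p)|v|^p\,dx + \tfrac{1}{q}\!\int_{\R^N}\Gamma|v|^q\,dx
\]
on $\h$ then produce, through the trace, solutions of \eqref{eq1}. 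Assumptions (V1)--(V2) make the quadratic part of $J$ equivalent to $\|v\|_{\h}^2$, while (N) together with the Hardy--Littlewood--Sobolev inequality and the trace embedding $\h \hookrightarrow H^{1/2}(\R^N) \hookrightarrow L^r(\R^N)$ guarantees $J \in C^1(\h)$.

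Because $p \geq 2$, $q > 2$ and $q < 2p$, the fibering map $t \mapsto J(tv)$ has a unique positive maximum on each nontrivial ray, so the Nehari set $\cN = \{v \neq 0 :\ J'(v)[v]=0\}$ is a $C^1$-manifold and the ground-state level
\[
c := \inf_{\cN} J = \inf_{v\neq 0}\max_{t>0} J(tv)
\]
is positive and equals the mountain-pass level. A Cerami sequence $(v_n)\subset\h$ at level $c$ is bounded, again because $q < 2p$.

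The main obstacle is the lack of Sobolev compactness on the whole space. I would exploit the $\mathbb{Z}^N$-periodicity of $V_p$ and $\Gamma$: the ``periodic'' functional $J_p$, obtained from $J$ by setting $V_l\equiv 0$, is invariant under integer translations in the $x$ variable. A Lions-type vanishing alternative applied to the traces $u_n = v_n(\cdot,0)$ rules out vanishing (otherwise the Choquard term would tend to zero and $c$ would be $0$), so up to a sequence of lattice translations $k_n \in \mathbb{Z}^N$ the shifted sequence converges weakly to a nonzero limit $v_\infty$. Since shifts decouple from $V_l$ as $|k_n| \to \infty$ (here one uses (V1) and in particular $V_l \in L^s$), $v_\infty$ is a critical point of $J_p$ satisfying $J_p(v_\infty) \geq c_p$, where $c_p$ is the Nehari level of $J_p$, which would force $c \geq c_p$. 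On the other hand, testing the Nehari characterization of $c$ against a ground state $w$ of $J_p$ and using $V_l \leq 0$ with $V_l \not\equiv 0$ yields
\[
c \leq \max_{t>0} J(tw) = \max_{t>0}\Bigl(J_p(tw) + \tfrac{t^2}{2}\!\int V_l w^2\,dx\Bigr) < \max_{t>0} J_p(tw) = c_p,
\]
the strict inequality $c < c_p$. This rules out the translations escaping to infinity, so $(k_n)$ stays bounded and $v_n \to v_*$ in $\h$ with $v_*$ a nontrivial critical point of $J$ attaining $c$. (If $V_l \equiv 0$, the problem is purely periodic and the first half of the argument already produces the ground state after a single translation.)

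Finally, $u := v_*(\cdot,0) \in H^{1/2}(\R^N)$ is a ground state of \eqref{eq1}, and its $C^\infty$ regularity follows from a standard bootstrap: the Riesz potential $I_\alpha$ is smoothing, the local term $\Gamma|u|^{q-2}u$ belongs to $L^r_{\mathrm{loc}}$ for arbitrarily large $r$, and elliptic regularity for the Neumann-type extension problem (equivalently for $\sqrt{-\Delta+m^2}+\mathrm{id}$) upgrades $u$ iteratively. The technical heart of the scheme is the strict inequality $c < c_p$, where the long-range nonlocal Choquard term and the $L^s$-decay of $V_l$ prescribed in (V1) have to be balanced carefully when evaluating $J$ against translated test functions.
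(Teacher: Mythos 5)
Your proposal follows essentially the same route as the paper: the Dirichlet-to-Neumann extension to $\R^{N+1}_+$, the generalized Nehari-manifold framework (needed because the nonlinearity $(I_\alpha*|u|^p)|u|^{p-2}u-\Gamma|u|^{q-2}u$ is sign-changing and non-monotone), a concentration-compactness/profile decomposition relative to the periodic functional $\cE_{\mathrm{per}}$, and the strict comparison $c<c_{\mathrm{per}}$ when $V_l<0$ to prevent the mass from escaping along lattice translations. The only step your sketch glosses over, and which the paper treats at some length in Step 3 of its splitting lemma, is that Lions' lemma applied in the half-space allows concentration points to drift in the direction \emph{normal} to the boundary ($x_n\to\infty$), a scenario that must be excluded by a separate cutoff argument using that the trace of the localized function vanishes; apart from this technical point the two arguments coincide.
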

\begin{remark}
Although we expect our solution to decay exponentially fast at infinity, we are currently unable to prove it rigorously. The lack of information about the sign of the ground state prevents us from applying standard comparison techniques like in \cite{Coti}.
\end{remark}
The proof of this result follows from a recent variational technique introduced in
\cite{BieganowskiMederski} and extended to the fractional setting in \cite{Bieganowski}.
The presence of the non-local convolution term introduces some additional difficulty.
To sketch the ideas, we first show that the Euler functional $\mathcal{E}$ associated to \eqref{eq1}
satisfies certain geometrical conditions on the Nehari manifold.
An abstract result (see \ref{ThSetting} below) yields then the existence
of a bounded Palais-Smale sequence $\{v_n\}_n$ for the $\mathcal{E}$.
A decomposition result for Palais-Smale sequences
(see Lemma \ref{lem:splitting}) implies that $\{v_n\}_n$ converges to a non-trivial solution
of \eqref{eq1}.

\medskip

Assumption (V2) is almost optimal for existence, as the following result shows.

\begin{theorem}\label{th:main2}
Suppose that (N) and ($\Gamma$) hold true, while (V2) is replaced by
\begin{enumerate}
  \item[(V3)] The potential $V$ can be split as $V=V_p+V_l$, where $V_p \in L^\infty (\R^N)$ is $\mathbb{Z}^N$-periodic and $\essinf_{x\in\R^N} V_p (x) > 0$, and $V_l \in L^\infty (\R^N)$ is such that $V_l(x) > 0$ for a.e. $x \in \R^N$, and
  \[
  \lim_{|x| \to \infty} V_l (x) = 0.
  \]
  Furthermore,
  \begin{equation*}
  V_l \in L^s (\mathbb{R}^N),
  \end{equation*}
  where $N \leq s < + \infty$.
\end{enumerate}
 Then equation \eqref{eq1} has no ground state solutions.
\end{theorem}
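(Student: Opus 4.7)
The plan is to argue by contradiction, comparing \eqref{eq1} with its periodic limit problem obtained by setting $V_l \equiv 0$:
\[
\sqrt{\strut -\Delta + m^2}\, u + (V_p(x)-m)u = \bigl(I_\alpha * |u|^p\bigr)|u|^{p-2}u - \Gamma(x)|u|^{q-2}u \qquad \text{in } \mathbb{R}^N.
\]
Under (V3) the periodic part satisfies $\essinf V_p > 0$, so this limit problem fits the hypotheses of Theorem \ref{th:main} with $V_l \equiv 0$ and therefore admits a ground state $u_\infty$ at some critical level $c_\infty$. Denote by $\mathcal{E}$ and $\mathcal{E}_\infty$ the Euler functionals associated to \eqref{eq1} and to this limit problem, by $\mathcal{N}$ and $\mathcal{N}_\infty$ the corresponding Nehari-type constraints built via the abstract scheme of \cite{BieganowskiMederski,Bieganowski}, and set $c := \inf_{\mathcal{N}} \mathcal{E}$.

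First I would show that $c \leq c_\infty$. Pick $\{y_n\} \subset \mathbb{Z}^N$ with $|y_n| \to \infty$ and set $v_n := u_\infty(\cdot - y_n)$. By the $\mathbb{Z}^N$-periodicity of $V_p$ and $\Gamma$, every contribution to $\mathcal{E}(v_n)$ except the $V_l$-term is invariant under this translation, while $V_l(x) \to 0$ as $|x| \to \infty$ together with $V_l \in L^s(\mathbb{R}^N)$ yields
\[
\int_{\mathbb{R}^N} V_l(x)\, v_n(x)^2\, dx \longrightarrow 0.
\]
Projecting each $v_n$ onto $\mathcal{N}$ gives $t_n v_n \in \mathcal{N}$ with $t_n \to 1$, whence $c \leq \mathcal{E}(t_n v_n) \to \mathcal{E}_\infty(u_\infty) = c_\infty$.

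Suppose now, for contradiction, that \eqref{eq1} has a ground state $u \in \mathcal{N}$ with $\mathcal{E}(u) = c$. Projecting $u$ onto $\mathcal{N}_\infty$ yields $s > 0$ with $su \in \mathcal{N}_\infty$. Since the two functionals differ only by the $V_l$-term,
\[
c_\infty \leq \mathcal{E}_\infty(su) = \mathcal{E}(su) - \frac{s^2}{2}\int_{\mathbb{R}^N} V_l(x)\, u(x)^2\, dx < \mathcal{E}(su) \leq \mathcal{E}(u) = c,
\]
where the strict inequality uses $V_l > 0$ almost everywhere and $u \not\equiv 0$, and the last inequality is the Nehari characterization $\mathcal{E}(u) = \sup_{t \geq 0}\mathcal{E}(tu)$. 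This contradicts $c \leq c_\infty$.

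The main obstacle will be making the Nehari projections and the convergence $t_n \to 1$ rigorous within the half-space formulation used in the proof of Theorem \ref{th:main}. One has to verify that the projections onto $\mathcal{N}$ and $\mathcal{N}_\infty$ are well-defined and continuous despite the competition between the attractive Choquard term and the subtractive $-\Gamma |u|^{q-2}u$ term, and that translations in $\mathbb{R}^N$ lift to the auxiliary functional on $\mathbb{R}^{N+1}_+$ in a way that preserves the periodic portion of the energy. Once these points are settled via the abstract framework of \cite{BieganowskiMederski,Bieganowski}, the energy comparison above closes the argument.
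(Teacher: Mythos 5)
Your argument is essentially the paper's: a two-sided comparison between the ground state level $c$ of $\cE$ and the level $c_{\mathrm{per}}$ of the periodic functional $\cE_{\mathrm{per}}$ (your $c_\infty$), with the strict inequality $c_{\mathrm{per}}<c$ coming from $V_l>0$ a.e.\ applied to the hypothetical ground state, and the reverse inequality coming from translating periodic competitors to infinity. The one place where your write-up leans on an unestablished claim is the convergence $t_n\to 1$ of the Nehari projections of the translated profiles; in this non-monotone setting that convergence is not free, and the paper simply avoids it. The trick is to translate an \emph{arbitrary} $v\in\cN_{\mathrm{per}}$ (so you do not even need existence of a periodic ground state), let $t_z v_z\in\cN$, and use the maximality of Nehari points along rays (Remark \ref{rem-nehari}) to write $\cE_{\mathrm{per}}(v)=\cE_{\mathrm{per}}(v_z)\geq\cE_{\mathrm{per}}(t_z v_z)=\cE(t_z v_z)-\tfrac{t_z^2}{2}\int_{\R^N}V_l(y+z)|\gamma(v)|^2\,dy\geq c-\tfrac{t_z^2}{2}\int_{\R^N}V_l(y+z)|\gamma(v)|^2\,dy$; then only $\sup_z t_z<\infty$ is needed, and that follows from coercivity of $\cE_{\mathrm{per}}$ on $\cN_{\mathrm{per}}$ together with $\cE_{\mathrm{per}}(t_z v_z)\leq c_{\mathrm{per}}$. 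Taking the infimum over $v\in\cN_{\mathrm{per}}$ gives $c_{\mathrm{per}}\geq c$ and closes the contradiction. With that substitution your outline is complete; the rest (existence and uniqueness of the projections, the lifting of $\mathbb{Z}^N$-translations to $\R^{N+1}_+$ acting only on the $y$-variable) is exactly as in the paper's framework.
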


Our last result deals with compactness of ground states in the case $V_l \equiv 0$.
\begin{theorem}\label{th:main3}
Suppose that (N), (V1) and (V2) hold true and $V_l = 0$. Let $\{ \Gamma_n \}_n \subset L^\infty (\R^N)$ be a sequence such that $\Gamma_n$ satisfies ($\Gamma$) and $\Gamma_n \to 0$ in $L^\infty (\R^N)$. Let $u_n \in H^{1/2} (\R^N)$ be a ground state solution with $\Gamma = \Gamma_n$. Then there are $z_n \in \mathbb{Z}^N$ such that
\begin{equation*}
u_n (\cdot - z_n) \to u \quad \mbox{in} \ H^{1/2} (\R^N),
\end{equation*}
where $u \in H^{1/2} (\R^N)$ is a ground state solution with $\Gamma = 0$.
\end{theorem}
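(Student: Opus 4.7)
The approach is to lift each ground state $u_n$ to its extension $v_n \in \h$, where it becomes a critical point of the extended Euler functional $\cE_n$ associated with $\Ga_n$, attaining the ground state level $c_n := \cE_n(v_n)$. Let $\cE_\infty$ and $c_\infty$ denote the corresponding objects for $\Ga = 0$; existence of a minimizer $v^* \in \h$ with $\cE_\infty(v^*) = c_\infty$ is provided by Theorem \ref{th:main}. First I would show $c_n \to c_\infty$ and uniform boundedness of $\{v_n\}$ in $\h$. For the upper bound, project $v^*$ onto the Nehari manifold $\cN_n$ of $\cE_n$: since $\|\Ga_n\|_\infty \to 0$ and the fibering map is continuous in the perturbation, the projection scalars $t_n \to 1$ and $\cE_n(t_n v^*) \to \cE_\infty(v^*) = c_\infty$, yielding $\limsup_n c_n \le c_\infty$. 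The reverse inequality is analogous: project $v_n$ onto $\cN_\infty$ and use $\Ga_n \ge 0$. Uniform boundedness then follows from the identity $c_n = \cE_n(v_n) - \tfrac{1}{\theta}\cE_n'(v_n)[v_n]$ for a suitable $\theta \in (2, \min\{q, 2p\})$, arguing as in Theorem \ref{th:main}.

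Next, since
\[
\cE_n(w) - \cE_\infty(w) = \tfrac{1}{q}\int_{\R^N}\Ga_n(x)\,|\mathrm{tr}\,w(x)|^q\,dx,
\]
the Sobolev trace embedding $\h \hookrightarrow L^q(\R^N)$ together with $\|\Ga_n\|_\infty \to 0$ and boundedness of $\{v_n\}$ gives $\cE_\infty(v_n) \to c_\infty$ and $\cE_\infty'(v_n) \to 0$ in $\h^*$. Thus $\{v_n\}$ is a bounded Palais--Smale sequence for the limit functional $\cE_\infty$ at level $c_\infty$. Since $V_l = 0$, the periodic profile of $\cE_\infty$ coincides with $\cE_\infty$ itself, so Lemma \ref{lem:splitting} yields, up to a subsequence, a decomposition
\[
v_n = v + \sum_{j=1}^{k} w^j(\cdot - (z_n^j,0)) + o(1) \quad \text{in } \h,
\]
where $v$ and the $w^j$ are critical points of $\cE_\infty$, $z_n^j \in \mathbb{Z}^N$ with $|z_n^j| \to \infty$, and the energies split as
\[
c_\infty = \cE_\infty(v) + \sum_{j=1}^{k} \cE_\infty(w^j).
\]

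The single-bubble argument then closes the proof: every nontrivial critical point of $\cE_\infty$ on $\cN_\infty$ has energy at least $c_\infty > 0$, so at most one term in the decomposition is nontrivial, and since $c_n \to c_\infty > 0$ exactly one must be. Either $v \ne 0$ and all $w^j \equiv 0$, in which case set $z_n := 0$; or $v \equiv 0$ and precisely one $w^1 \ne 0$, in which case set $z_n := z_n^1$. In both cases, the remaining $o(1)$ in Lemma \ref{lem:splitting} is in fact strong convergence, so $v_n(\cdot - (z_n,0)) \to v^\sharp$ strongly in $\h$, and the trace $u := \mathrm{tr}\, v^\sharp \in H^{1/2}(\R^N)$ is a ground state of the $\Ga = 0$ problem, giving the statement. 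The main obstacle is ensuring that Lemma \ref{lem:splitting} genuinely covers the nonlocal Choquard nonlinearity: one must verify that the cross-interactions between profiles centered at diverging translations satisfy
\[
\int_{\R^N}\int_{\R^N}\frac{|w^i(x)|^p \, |w^j(y - (z_n^i - z_n^j))|^p}{|x-y|^{N-\alpha}}\,dx\,dy \longrightarrow 0,
\]
which follows from Hardy--Littlewood--Sobolev combined with the decay of the Riesz kernel, and that $\cE_\infty$ splits additively along the decomposition. These facts are already established in the proof of Theorem \ref{th:main}, so the argument goes through.
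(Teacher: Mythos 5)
Your argument is correct, and its preparatory steps (the two-sided Nehari projection giving $c_n \to c_0$, and the boundedness of the extensions $\{v_n\}_n$ via $\cE_n(v_n)-\tfrac1q \cE_n'(v_n)(v_n)$) coincide with the paper's Lemmas \ref{lem:7.2} and \ref{lem:7.3}. Where you genuinely diverge is the compactness step. You observe that, since $|\Ga_n|_\infty\to 0$ and $\{v_n\}_n$ is bounded, one has $|\cE_0(v_n)-\cE_n(v_n)|\le \tfrac{1}{q}|\Ga_n|_\infty|\gamma(v_n)|_q^q\to 0$ and $\|\cE_0'(v_n)\|\le C|\Ga_n|_\infty\|v_n\|^{q-1}\to 0$, so that $\{v_n\}_n$ is a bounded Palais--Smale sequence for the \emph{limit} functional $\cE_0$ at level $c_0$; you then invoke the splitting Lemma \ref{lem:splitting} (with $\Ga\equiv 0$ and $V_l\equiv 0$, so $\cE=\cE_{\mathrm{per}}$) together with the quantization $\cE_0(w^k)\ge c_0>0$ to force exactly one nontrivial profile, whence strong convergence after translation. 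The paper instead argues directly: it rules out vanishing by combining Lions' lemma with the uniform mountain-pass radius of Lemma \ref{lem:7.1}, translates to a nontrivial weak limit $v$, shows $\cE_0'(v)=0$, identifies $\cE_0(v)=c_0$ by a Fatou argument applied to $\bigl(\tfrac12-\tfrac{1}{2p}\bigr)\mathscr{D}(v_n)$, and then upgrades to strong convergence using Lemma \ref{lem:brezis-D}. Your route is shorter because it reuses the Section 4 machinery wholesale; in particular, the Choquard cross-term estimates you flag as the main obstacle are exactly those handled by Lemma \ref{lem:brezis-D} and its iteration inside Lemma \ref{lem:splitting}, not in the proof of Theorem \ref{th:main}. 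The paper's route avoids re-deriving the profile decomposition for a perturbed sequence and stays self-contained in Section 7. One small point to make explicit in your version: the translations produced by Lemma \ref{lem:splitting} live in $\mathbb{Z}_+^{N+1}$, and one must note (as in Step 3 of its proof) that concentration with unbounded $x$-component is impossible, so the shifts can be taken of the form $(0,z_n)$ with $z_n\in\mathbb{Z}^N$, as the statement of the theorem requires.
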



The paper is organized as follows. Section \ref{sect:2} describes the local realization of the nonlocal operator $\sqrt{-\Delta + m^2}$ and contains some preliminary facts. In Section \ref{sect:3} we present the variational setting. Section \ref{sect:4} provides Brezis-Lieb-type splitting results. Finally, Sections \ref{sect:5}, \ref{sect:6}, \ref{sect:7} contain proofs of Theorems \ref{th:main}, \ref{th:main2}, \ref{th:main3} respectively.

\section{Functional setting and local realization}\label{sect:2}

The realization of the operator $\sqrt{-\Delta + m^2}$ in Fourier variables is not convenient
for our purposes. Therefore, we prefer to make use of a \emph{local} realization
(see \cite{Caffarelli, Coti}) by means of the \emph{Dirichlet-to-Neumann} operator.

Given $u \in \cS (\R^N)$, the Schwartz space of rapidly decaying smooth functions defined on $\R^N$, there exists one and only one function $v \in \cS (\R_+^{N+1})$ such that
\begin{gather*}
\left\{ \begin{array}{ll}
-\Delta v  + m^2 v = 0 \ & \hbox{in} \ \R_+^{N+1}, \\
v(0,y) = u(y) \ & \hbox{for} \ y \in \R^N = \partial \R_+^{N+1}.
\end{array} \right.
\end{gather*}
Setting
\begin{gather*}
Tu(y) = - \frac{\partial v}{\partial x} (0,y),
\end{gather*}
we easily see that the problem
\begin{gather*}
\left\{ \begin{array}{ll}
-\Delta w  + m^2 w = 0 \ & \hbox{in} \ \R_+^{N+1}, \\
w(0,y) = Tu(y) \ & \hbox{for} \ y \in \R^N = \partial \R_+^{N+1}
\end{array} \right.
\end{gather*}
is solved by $w(x,y) = - \frac{\partial v}{\partial x} (x,y)$. From this we deduce that
\begin{gather*}
T(Tu)(y) = - \frac{\partial w}{\partial x} (0,y) = \frac{\partial^2 v}{\partial x^2} (0,y) = \left( -\Delta_y v + m^2 v \right) (0,y),
\end{gather*}
and hence $T \circ T = (-\Delta_y + m^2)$, namely $T$ is a square root of the Schr\"odinger operator $-\Delta_y + m^2$ on $\R^N = \partial \R_+^{N+1}$.

From the previous construction, we can replace the nonlocal problem (\ref{eq1}) in $\R^N$ with the local Neumann problem in the half-space $\R_+^{N+1}$
\begin{gather*}
\left\lbrace
\begin{array}{ll}
-\Delta v (x,y) + m^2 v(x,y) = 0 \quad \mathrm{in} \ \R_+^{N+1}, \\
-\frac{\partial v}{\partial x} (0,y) = - \left(V(y)-m\right) v(0,y) +  (I_\alpha * |v(0,\cdot)|^p ) |v(0,y)|^{p-2} v(0,y) \\
\quad \quad \quad \quad \quad \quad \quad \quad - \Gamma(y) |v(0,y)|^{q-2}v(0,y) \quad \mathrm{for} \ y \in \R^N.
\end{array}
\right.
\end{gather*}

We introduce the Sobolev space $H = H^1 ( \R_+^{N+1})$, and recall that there is a continuous \emph{trace operator} $\gamma \colon H \rightarrow H^{1 / 2} (\R^N)$. Moreover, this operator is surjective and the inequality
\begin{equation} \label{eq:1.2}
|\gamma(v)|_p^p \leq p |v|^{p-1}_{2(p-1)} \left| \frac{\partial v}{\partial x} \right|_2
\end{equation}
holds for every $v \in H$: we refer to \cite{Tartar} for basic facts about the Sobolev space $H^{1/2} (\R^N)$ and the properties of the trace operator. It follows immediately from \eqref{eq:1.2} that
\begin{equation}
\label{eq:1.3}
\int_{\mathbb{R}^N} |\gamma(v)|^2 \, dy \leq m \int_{\mathbb{R}_{+}^{N+1}} |\nabla v|^2 \, dx\, dy + \frac{1}{m} \int_{\mathbb{R}_{+}^{N+1}} |v|^2 \, dx\, dy
\end{equation}

Reasoning as in \cite[Page 5]{Cingolani} and taking the Hardy-Littlewood-Sobolev inequality (see \cite[Theorem 4.3]{Lieb}) into consideration, it follows easily that the functional $\cE \colon H \rightarrow \R$ defined by
\begin{align*}
\cE (v) &= \frac{1}{2} \int_{\R_+^{N+1}} |\nabla v|^2 \, dx \, dy + \frac{m^2}{2} \int_{\R_+^{N+1}} v^2 \, dx \, dy + \frac{1}{2} \int_{\R^N} \left( V(y) - m \right) \gamma(v)^2 \, dy \\
&\quad -  \frac{1}{2p} \int_{\R^N}  (I_\alpha * |\gamma(v)|^p) |\gamma(v)|^p \, dy + \frac{1}{q} \int_{\R^N} \Gamma(y) |\gamma(v)|^q \, dy
\end{align*}
is of class $C^1$, and its critical points are (weak) solutions to problem (\ref{eq1}). In particular,
\begin{align}\label{eq2}
\mathscr{D}(v) := \int_{\R^N}  (I_\alpha * |\gamma(v)|^p) |\gamma(v)|^p \, dy
= \int_{\R^N \times \R^N}  \frac{|\gamma(v)(y)|^p |\gamma(v)(y')|^p}{|y-y'|^{N-\alpha}} \, dy \, dy' \leq C  \|v\|^{2p}.
\end{align}

\begin{lemma}
The quadratic form
\begin{equation} \label{eq:1.5}
  v \mapsto Q(v) := \int_{\R_+^{N+1}}
  |\nabla v|^2 \, dx \, dy + m^2 \int_{\R_+^{N+1}} v^2 \, dx \, dy +
  \int_{\R^N} \left( V(y) - m \right) \gamma(v)^2 \, dy \in \R.
\end{equation}
defines a norm on $H$  that is equivalent to the standard one on $H^1(\mathbb{R}_{+}^{N+1})$.
\end{lemma}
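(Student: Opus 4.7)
The plan is to show the two-sided equivalence $c\|v\|_{H^1}^2 \le Q(v) \le C\|v\|_{H^1}^2$ for all $v \in H$, from which it follows that $Q$ is positive-definite and its square root is a norm equivalent to the standard one.

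The upper bound is straightforward. Since $V \in L^\infty(\R^N)$, the coefficient $V-m$ is bounded, so it suffices to estimate $|\gamma(v)|_2^2$ by $\|v\|_{H^1}^2$. That is exactly the content of \eqref{eq:1.3}: combining with $|\nabla v|_2^2 + m^2|v|_2^2 \le \max\{1,m^2\}\|v\|_{H^1}^2$, we get $Q(v) \le C\|v\|_{H^1}^2$.

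The lower bound is the actual obstacle, because $V-m$ may be negative on a set of positive measure, so the boundary term can subtract from the interior $H^1$-contribution. The key is a \emph{sharp} version of the trace inequality, namely
\[
m\,|\gamma(v)|_2^2 \le \int_{\R_+^{N+1}} |\nabla v|^2\,dx\,dy + m^2\!\int_{\R_+^{N+1}} v^2\,dx\,dy =: A(v).
\]
This can be obtained by applying Young's inequality $2ab \le \varepsilon a^2 + \varepsilon^{-1}b^2$ with $\varepsilon = 1/m$ to \eqref{eq:1.2} with $p=2$, namely $|\gamma(v)|_2^2 \le 2|v|_2 |\partial_x v|_2$. (Conceptually it expresses that the Dirichlet--to--Neumann operator associated with $-\Delta + m^2$, i.e.\ $\sqrt{-\Delta+m^2}$, has spectrum contained in $[m,\infty)$, and $A(v)$ is the Dirichlet energy that dominates $\langle \sqrt{-\Delta+m^2}\gamma(v),\gamma(v)\rangle \ge m|\gamma(v)|_2^2$.)

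Once this sharp inequality is in hand, set $V_0 := \essinf_{\R^N} V > 0$ by (V2), and write $V = V_0 + (V-V_0)$ with $V-V_0 \ge 0$. Discarding the non-negative contribution yields
\[
Q(v) \ge A(v) + (V_0 - m)\,|\gamma(v)|_2^2 .
\]
If $V_0 \ge m$, this is already $\ge A(v)$. Otherwise $m - V_0 \in (0,m)$, and the sharp trace inequality gives $(m-V_0)|\gamma(v)|_2^2 \le \tfrac{m-V_0}{m}A(v)$, hence $Q(v) \ge \tfrac{V_0}{m} A(v)$. In either case $Q(v) \ge c\,A(v) \ge c\min\{1,m^2\}\|v\|_{H^1}^2$, which concludes the equivalence. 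The only delicate point is the choice of the optimal weight in Young's inequality to produce the sharp constant $m$; any sub-optimal constant would fail precisely in the regime $V_0 < m$.
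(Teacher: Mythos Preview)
Your proof is correct and follows essentially the same approach as the paper's: both establish the upper bound directly from the boundedness of $V$ together with the trace estimate, and for the lower bound both replace $V$ by $V_0=\essinf V$, then use the sharp form $m|\gamma(v)|_2^2 \le |\nabla v|_2^2 + m^2|v|_2^2$ of the trace inequality (obtained from \eqref{eq:1.2} via Young with the optimal parameter) to absorb the possibly negative boundary term, yielding $Q(v)\ge \min\{1,V_0/m\}\,A(v)$. Your write-up is in fact a bit cleaner, since you make explicit why the \emph{sharp} constant $m$ is needed when $V_0<m$.
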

\begin{proof}
For a real number \(z\), we will write \(z^{+}=\max\{z,0\}\geq 0\) and \(z^{-}=-\min\{z,0\} \geq 0\).
For any $v \in H$, we recall \eqref{eq:1.3} and compute
\begin{align*}
Q(v) &\geq \int_{\mathbb{R}_{+}^{N+1}} |\nabla v|^2 \, dx\, dy + m^2 \int_{\mathbb{R}_{+}^{N+1}} |v|^2 \, dx\, dy - \int_{\mathbb{R}^N} \left( V(y)-m \right)^{-} |\gamma (v)|^2 \, dy \\
&\geq \int_{\mathbb{R}_{+}^{N+1}} |\nabla v|^2 \, dx\, dy + m^2 \int_{\mathbb{R}_{+}^{N+1}} |v|^2 \, dx\, dy -  \left( \operatorname{ess\, inf}_{\mathbb{R}^N} V -m \right)^{-} \int_{\mathbb{R}^N} |\gamma (v)|^2 \, dy \\
&\geq \int_{\mathbb{R}_{+}^{N+1}} |\nabla v|^2 \, dx\, dy + m^2 \int_{\mathbb{R}_{+}^{N+1}} |v|^2 \, dx\, dy \\
&\quad {} + \left( \operatorname{ess\, inf}_{\mathbb{R}^N} V -m \right)^{-} \left(
\frac{1}{m} \int_{\mathbb{R}_{+}^{N+1}} |\nabla v|^2 \, dx\, dy + m \int_{\mathbb{R}_{+}^{N+1}} |v|^2 \, dx\, dy
\right) \\
&\geq \min \left\{ 1, \frac{\operatorname{ess\, inf}_{\mathbb{R}^N} V}{m}
\right\} \int_{\mathbb{R}_{+}^{N+1}} |\nabla v|^2 \, dx\, dy + \min \left\{ m^2 , \operatorname{ess\, inf}_{\mathbb{R}^N} V
\right\}  \int_{\mathbb{R}_{+}^{N+1}} |v|^2 \, dx\, dy.
\end{align*}
Recalling assumption (V2) we conclude that there exists a constant $C>0$ such that
\begin{equation} \label{eq:1.6}
Q(v) \geq C^{-1} \left\| v \right\|_{H^1(\mathbb{R}_{+}^{N+1})}.
\end{equation}
On the other hand, for any $v \in H$,
\begin{align*}
Q(v) &\leq \int_{\mathbb{R}_{+}^{N+1}} |\nabla v |^2 \, dx\, dy + m^2 \int_{\mathbb{R}_{+}^{N+1}} |v|^2 \, dx\, dy + \int_{\mathbb{R}^N} \left( V(y)-m \right)^{+} |\gamma (v)|^2 \, dy \\
&\leq \int_{\mathbb{R}_{+}^{N+1}} |\nabla v |^2 \, dx\, dy + m^2 \int_{\mathbb{R}_{+}^{N+1}} |v|^2 \, dx\, dy + \left( \|V\|_\infty -m \right)^{+}\int_{\mathbb{R}^N}  |\gamma (v)|^2 \, dy,
\end{align*}
and \eqref{eq:1.3} immediately yields that
\begin{equation} \label{eq:1.7}
Q(v) \leq C \left\| v \right\|_{H^1(\mathbb{R}_{+}^{N+1})}.
\end{equation}
The conclusion follows from \eqref{eq:1.6} and \eqref{eq:1.7}
\end{proof}
In the rest of the paper, we will endow $H$ with the norm \eqref{eq:1.5}.

We will repeatedly use the following integration Lemma.
\begin{lemma}\label{lemmaIntegr}
	If
	\begin{equation} \label{1.8}
	\max \left\{ 1, \frac{N}{N(2-p)+p}\right\} < t < \frac{N}{N-\alpha}
	\end{equation}
then~\(I_\alpha \in L^t_{\mathrm{loc}} (\R^N)\). In particular, we can decompose $I_\alpha = I_1 + I_2$, where
\begin{equation} \label{eq:decompI}
I_1 := I_\alpha \chi_{B(0,1)} \in L^t (\R^N), \quad I_2 := I_\alpha - I_1 \in L^\infty (\R^N).
\end{equation}
\end{lemma}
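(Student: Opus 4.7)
The plan is to reduce both assertions to an elementary radial integration. First I would pass to polar coordinates to evaluate, for any $R>0$,
\begin{equation*}
\int_{B(0,R)} |x|^{-(N-\alpha)t}\, dx = \omega_{N-1} \int_0^R r^{N-1-(N-\alpha)t}\, dr,
\end{equation*}
where $\omega_{N-1}$ denotes the surface measure of the unit sphere in $\R^N$. The one-dimensional integral on the right-hand side converges exactly when $N-1-(N-\alpha)t > -1$, i.e.\ when $t < N/(N-\alpha)$, which is precisely the right-hand inequality in \eqref{1.8}. Consequently, under the hypothesis of the lemma, $I_\alpha \in L^t(B(0,R))$ for every $R>0$, and this is the definition of $I_\alpha \in L^t_{\mathrm{loc}}(\R^N)$.

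For the decomposition $I_\alpha = I_1 + I_2$, the idea is to separate the singularity at the origin from the decay at infinity. Taking $R=1$ in the computation above shows that $I_1 = I_\alpha \chi_{B(0,1)}$ lies in $L^t(\R^N)$, because $\|I_1\|_t^t$ equals the (finite) integral already computed on $B(0,1)$. On the complementary region, the pointwise inequality $|x|\geq 1$ together with $\alpha < N$ (from assumption (N)) yields
\begin{equation*}
|I_2(x)| = |x|^{-(N-\alpha)} \leq 1 \quad \text{for a.e.\ } x \in \R^N \setminus B(0,1),
\end{equation*}
so that $I_2 \in L^\infty(\R^N)$.

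The lower bound on $t$ appearing in \eqref{1.8} plays no role in either step of the argument; it is recorded here only to guarantee that the admissible range of exponents is non-empty and compatible with the Hardy-Littlewood-Sobolev estimates that will be applied later to convolutions with $I_\alpha$. Since the entire proof reduces to a standard radial computation and a trivial pointwise bound, no step presents a genuine obstacle.
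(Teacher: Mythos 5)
Your argument is correct and is exactly the standard radial computation: the paper itself states this lemma without proof, and the polar-coordinate criterion $t<N/(N-\alpha)$ for local integrability of $|x|^{-(N-\alpha)t}$, together with the trivial bound $|x|^{-(N-\alpha)}\leq 1$ for $|x|\geq 1$ (using $\alpha<N$ from (N)), is precisely what is needed. Your remark that the lower bound in \eqref{1.8} is irrelevant to this lemma and only matters for the later H\"older/Hardy--Littlewood--Sobolev estimates is also accurate.
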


\begin{remark}
It can be easily checked that our results continue to hold if $I_\alpha$ is replaced by a convolution kernel~$W = W_1 + W_2$ where $W_1 \in L^t (\R^N)$, $W_2 \in L^\infty (\R^N)$ and $t$ satisfies \eqref{1.8}.
\end{remark}

For any function \(u \in H^{1/2}(\mathbb{R}^N)\), we set
\[
\phi_u (x) := \left( I_\alpha * |u|^p \right) (x) =\int_{\R^N} \frac{|u(y)|^p}{|x-y|^{N-\alpha}} \, dy.
\]
The following properties are straightforward.
\begin{lemma}\label{phiProperties}
For any $u \in H^{1/2}(\R^N)$ there hold
\begin{itemize}
\item[(i)] $\phi_{tu} = t^p \phi_u$ for any $t > 0$;
\item[(ii)] $\phi_u (\cdot + z) = \phi_{u (\cdot + z)}$ for any $z \in \mathbb{Z}^N$;
\item[(iii)] $\phi_u (x) \geq 0$ for a.e. $x \in \R^N$.
\end{itemize}
\end{lemma}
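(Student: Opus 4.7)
The three properties are of a formal nature, so the plan is to verify them by direct computation, with the main conceptual point being that each identity makes sense almost everywhere. Before tackling (i)--(iii), I would quickly note the well-definedness of $\phi_u(x)$ for a.e.~$x \in \mathbb{R}^N$: combining the decomposition $I_\alpha = I_1 + I_2$ from Lemma~\ref{lemmaIntegr} with the Sobolev embedding $H^{1/2}(\mathbb{R}^N) \hookrightarrow L^{2N/(N-1)}(\mathbb{R}^N)$ and the Hardy--Littlewood--Sobolev inequality already invoked in \eqref{eq2}, the convolution $\phi_u = I_\alpha * |u|^p$ is finite almost everywhere.

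For (i), I would just pull the factor $t^p$ out of the convolution: since $|tu(y)|^p = t^p |u(y)|^p$ for $t>0$, Fubini/linearity of the integral gives
\[
\phi_{tu}(x) = \int_{\mathbb{R}^N} \frac{t^p|u(y)|^p}{|x-y|^{N-\alpha}}\,dy = t^p \phi_u(x).
\]

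For (ii), the observation is that $I_\alpha$ is translation invariant in the sense that $I_\alpha(x+z-y) = I_\alpha(x-(y-z))$. Substituting $y' = y-z$ in the integral defining $\phi_u(x+z)$ (note that $z \in \mathbb{Z}^N$ is not really needed here, any $z \in \mathbb{R}^N$ works) produces
\[
\phi_u(x+z) = \int_{\mathbb{R}^N} \frac{|u(y'+z)|^p}{|x-y'|^{N-\alpha}}\,dy' = \phi_{u(\cdot+z)}(x),
\]
which is exactly the claimed equivariance.

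For (iii), the integrand $|u(y)|^p / |x-y|^{N-\alpha}$ is non-negative wherever it is defined, hence the integral is non-negative wherever it converges; by the well-definedness remark this covers a.e.~$x$. I do not foresee any real obstacle here -- the only thing to be slightly careful about is to invoke the preceding lemma to justify that all three manipulations are taking place at points where $\phi_u$ is actually finite.
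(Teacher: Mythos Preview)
Your proposal is correct and mirrors the paper's treatment: the paper simply declares the properties ``straightforward'' and gives no proof, so your direct verification by homogeneity, change of variables, and nonnegativity of the integrand is exactly the intended (and only reasonable) argument.
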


The weak sequential continuity of $\mathscr{D}'$ plays a crucial r\^{o}le in our reasoning. We collect in the next Lemma this property.

\begin{lemma}\label{weakContDprime}
Assume that $(v_n) \subset H$ is a bounded sequence such that $v_n \weakto v_0$ in $H$ and $v_n \to v_0$ a.e. in~$\R_+^{N+1}$. Then for any $\psi \in H$ there holds
\begin{equation*}
\mathscr{D}'(v_n)(\psi) \to \mathscr{D}'(v_0)(\psi).
\end{equation*}
\end{lemma}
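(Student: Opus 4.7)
The plan is to push everything down to the boundary $\{x=0\}\cong\R^N$, rewrite $\mathscr{D}'(v_n)(\psi)$ as a bilinear Hardy--Littlewood--Sobolev (HLS) pairing, and then argue by weak $\times$ strong convergence in appropriate Lebesgue spaces. Setting $u_n:=\gamma(v_n)$, $u_0:=\gamma(v_0)$, $\varphi:=\gamma(\psi)$, continuity of the trace operator yields $u_n\weakto u_0$ in $H^{1/2}(\R^N)$; combined with the compactness of $H^{1/2}_{\mathrm{loc}}\hookrightarrow L^2_{\mathrm{loc}}(\R^N)$ and a diagonal extraction, a subsequence satisfies $u_n\to u_0$ a.e.\ on $\R^N$. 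Since the limit is uniquely identified, it suffices to prove convergence along such a subsequence.

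Write
\[
\mathscr{D}'(v_n)(\psi)=2p\int_{\R^N}(I_\alpha*|u_n|^p)\,|u_n|^{p-2}u_n\,\varphi\,dy
\]
and decompose $\mathscr{D}'(v_n)(\psi)-\mathscr{D}'(v_0)(\psi)=2p(A_n+B_n)$ with
\[
A_n=\int_{\R^N}[I_\alpha*(|u_n|^p-|u_0|^p)]\,|u_n|^{p-2}u_n\,\varphi\,dy,\qquad B_n=\int_{\R^N}(I_\alpha*|u_0|^p)\,(|u_n|^{p-2}u_n-|u_0|^{p-2}u_0)\,\varphi\,dy.
\]
Assumption (N), namely $(N-1)p-N<\alpha$, yields the strict inequality $\tfrac{2pN}{N+\alpha}<\tfrac{2N}{N-1}$, so the Sobolev embedding bounds $(u_n)$, $u_0$ and $\varphi$ in $L^{2pN/(N+\alpha)}(\R^N)$, and hence $(|u_n|^p)$ is bounded in $L^{2N/(N+\alpha)}(\R^N)$. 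By HLS, convolution with $I_\alpha$ is bounded from $L^{2N/(N+\alpha)}$ into $L^{2N/(N-\alpha)}$, so in particular $I_\alpha*|u_0|^p\in L^{2N/(N-\alpha)}(\R^N)$.

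The heart of the argument is the strong convergence
\[
|u_n|^{p-2}u_n\,\varphi\longrightarrow|u_0|^{p-2}u_0\,\varphi\quad\text{in }L^{2N/(N+\alpha)}(\R^N),
\]
established by Vitali's theorem: pointwise a.e.\ convergence of the integrand is immediate; equi-integrability on bounded sets follows from the local strong convergence $u_n\to u_0$ in $L^{2pN/(N+\alpha)}_{\mathrm{loc}}(\R^N)$ (strict subcriticality plus Rellich--Kondrachov for $H^{1/2}$); and equi-integrability at infinity follows from H\"older's inequality applied to $|u_n|^{p-1}\varphi$, using the uniform $L^{2pN/(N+\alpha)}$-bound on $u_n$ together with the absolute continuity of the integral of $|\varphi|^{2pN/(N+\alpha)}$. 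Once this is in hand, $B_n\to 0$ is a direct weak--strong pairing against the fixed $I_\alpha*|u_0|^p\in L^{2N/(N-\alpha)}$. For $A_n$, a.e.\ convergence plus the uniform $L^{2N/(N+\alpha)}$-bound yields $|u_n|^p\weakto|u_0|^p$ weakly in $L^{2N/(N+\alpha)}$, whence (weak continuity of HLS convolution) $I_\alpha*(|u_n|^p-|u_0|^p)\weakto 0$ in $L^{2N/(N-\alpha)}$; pairing with the strongly convergent second factor gives $A_n\to 0$. The main obstacle is precisely the Vitali step: the fact that the test function $\varphi$ (rather than $u_n$) provides the decay needed to control the tails uniformly in $n$ is the critical observation.
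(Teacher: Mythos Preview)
Your argument is correct. The decomposition into $A_n$ and $B_n$, the Vitali step yielding strong convergence of $|u_n|^{p-2}u_n\varphi$ in $L^{2N/(N+\alpha)}(\R^N)$ (with the tails controlled by the fixed factor $\varphi$ via H\"older), and the weak--strong pairing for $A_n$ all go through under assumption (N), which is exactly what makes $2pN/(N+\alpha)$ strictly subcritical for $H^{1/2}(\R^N)$. The subsequence-of-subsequence reduction to recover convergence of the full real sequence $\mathscr{D}'(v_n)(\psi)$ is also handled cleanly.

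For comparison: the paper does not give its own argument here but simply defers to \cite[Lemma~3.4]{a}, \cite[Lemma~4.2]{Cingolani}, and \cite{CL}. Those references carry out essentially the same HLS-based weak--strong analysis you wrote up, so your proof is in the same spirit as the cited literature, only spelled out in full. One minor remark: you rightly do not try to deduce a.e.\ convergence of the traces $u_n$ from the hypothesis $v_n\to v_0$ a.e.\ in $\R_+^{N+1}$ (the boundary has measure zero), and instead extract it from the weak $H^{1/2}$-convergence via local compactness; this is the correct way to proceed and is worth keeping explicit.
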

\begin{proof}
We refer e.g. to~\cite[Lemma 3.4]{a}. See also \cite[Lemma 4.2]{Cingolani} and the proof of equation~(3.3) in \cite{CL}.
\end{proof}

\section{Variational methods for equation \eqref{eq1}}\label{sect:3}

We briefly introduce the variational setting from \cite{BieganowskiMederski} based on the Nehari manifold technique, which does not require the monotonicity condition on the nonlinearity. Let $(E, \| \cdot \|_E)$ be a Hilbert space and suppose that $\cE \colon E \rightarrow \R$ given by
\[
\cE(v) := \frac{1}{2} \|v\|^2 - \cI(v)
\]
is a functional on $E$, where $\cI$ is of class $C^1$. Define
\[
\cN := \left\{ v \in E \setminus \{ 0 \} \mid \cE'(v)(v) = 0 \right\}.
\]

\begin{theorem}[{\cite[Theorem 2.1]{BieganowskiMederski}}]\label{ThSetting}
Suppose that the following conditions hold:
\begin{enumerate}
\item[(J1)] there is $r>0$ such that $a:= \inf_{\|v\|_E=r} \cE (v) > \cE(0) = 0$;
\item[(J2)] there is $q \geq 2$ such that $\cI (t_n v_n) / t_n^q \to \infty$ for any $t_n \to\infty$ and $v_n\to v \neq 0$ as $n\to\infty$;
\item[(J3)] for $t \in (0,\infty) \setminus \{1\}$ and $v \in \cN$
\[
\frac{t^2-1}{2} \cI'(v)(v) - \cI(tv)+\cI(v) < 0;
\]
\item[(J4)] $\cE$ is coercive on $\cN$.
\end{enumerate}
Then $\inf_\cN \cE > 0$ and there exists a bounded minimizing sequence for $\cE$ on $\cN$, i.e. there is a sequence $\{v_n\}_n \subset \cN$ such that $\cE(v_n) \to \inf_\cN \cE$ and $\cE'(v_n) \to 0$.
\end{theorem}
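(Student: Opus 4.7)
The plan is to follow the Szulkin--Weth style Nehari reduction, which turns the constrained problem on $\cN$ into an unconstrained one on the unit sphere $S := \{v \in E : \|v\|_E = 1\}$. For each $v \in E \setminus \{0\}$ I analyse the fibre map $\varphi_v(t) := \cE(tv)$ for $t > 0$. From (J1) one sees $\varphi_v(t) \geq a > 0$ whenever $\|tv\|_E = r$, and (J2) forces $\varphi_v(t) \to -\infty$ as $t \to \infty$ (since $q \geq 2$ dominates the quadratic $\tfrac{t^2}{2}\|v\|_E^2$). Hence $\varphi_v$ attains its positive maximum at some $t_v > 0$, which is automatically a critical point, giving $\cE'(t_v v)(t_v v) = 0$, that is, $t_v v \in \cN$. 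Uniqueness of such a $t_v$ is exactly what (J3) delivers: if $t_v v \in \cN$ and $s \neq 1$, plugging $u = t_v v$ into the inequality of (J3) and using $\cI'(u)(u) = \|u\|_E^2$ yields $\varphi_v(s t_v) < \varphi_v(t_v)$, so no other fibre critical point exists.

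This produces a map $\hat m \colon S \to \cN$, $\hat m(v) := t_v v$, with continuous inverse $u \mapsto u/\|u\|_E$. The reduced functional $\hat\cE := \cE \circ \hat m \in C^1(S,\R)$ then satisfies $\inf_S \hat\cE = \inf_\cN \cE$. Positivity of this infimum follows easily: for any $u \in \cN$, since $t = 1$ is the unique maximum of $\varphi_u$, setting $\lambda := r/\|u\|_E$ gives $\cE(u) \geq \cE(\lambda u) \geq a$ by (J1), whence $\inf_\cN \cE \geq a > 0$. Next, Ekeland's variational principle applied to $\hat\cE$ on the complete Hilbert submanifold $S$ yields a sequence $\{w_n\} \subset S$ with $\hat\cE(w_n) \to \inf_S \hat\cE$ and $\|d\hat\cE(w_n)\|_{T_{w_n}^* S} \to 0$. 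Setting $v_n := \hat m(w_n) \in \cN$, the chain rule together with the identity $\cE'(v_n)(v_n) = 0$ allows us to conclude that $\cE(v_n) \to \inf_\cN \cE$ and $\cE'(v_n) \to 0$ in $E^*$, while (J4) delivers the boundedness of $\{v_n\}$.

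The main obstacle I foresee is two-fold. First, the continuity of $v \mapsto t_v$ (needed for $\hat m$ to be a homeomorphism and for $\hat\cE$ to be $C^1$ so that Ekeland applies) is subtle because $t_v$ is only characterised by the first-order condition $\varphi_v'(t_v) = 0$; one has to argue by contradiction, showing that any accumulation point of $\{t_{v_n}\}$ for $v_n \to v_0 \in S$ remains a positive maximiser of $\varphi_{v_0}$ and hence equals $t_{v_0}$ by uniqueness, with (J2) preventing escape to $+\infty$ and (J1) preventing collapse to $0$. Second, translating a tangential Palais--Smale sequence for $\hat\cE$ on $S$ into a genuine Palais--Smale sequence for $\cE$ on $E$ requires $t_{w_n} = \|v_n\|_E$ to stay in a compact subinterval of $(0,\infty)$: coercivity (J4) bounds it from above, and the strict positivity of $\inf_\cN \cE$ together with (J1) bounds it from below, so that the splitting $E = \R v_n \oplus T_{w_n}S$ is uniformly transversal and the comparison between $\|d\hat\cE(w_n)\|$ and $\|\cE'(v_n)\|_{E^*}$ is quantitatively controlled. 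Everything else reduces to soft calculus on Hilbert manifolds.
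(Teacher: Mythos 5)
Your proposal is correct and follows essentially the same route as the source: the paper does not reprove Theorem \ref{ThSetting} but imports it from \cite[Theorem 2.1]{BieganowskiMederski}, whose proof is exactly this Szulkin--Weth-type reduction to the unit sphere (unique fibre maximiser $t_v$ from (J1)--(J3), homeomorphism $S\to\cN$, $C^1$ reduced functional, Ekeland, and (J4) for boundedness). The two delicate points you flag are handled as you indicate; in particular the lower bound $\inf_{u\in\cN}\|u\|>0$ follows from continuity of $\cE$ at $0$ together with $\cE\geq a>0$ on $\cN$.
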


We can check that these assumptions are satisfied in our case.

\begin{lemma}
(J1)--(J4) hold with $(E, \|\cdot\|_E) := (H, \|\cdot\|)$ and
\[
\cI (v) := \frac{1}{2p} \mathscr{D}(v) - \frac{1}{q} \int_{\R^N} \Ga (y) | \gamma(v)|^q \, dy.
\]
\end{lemma}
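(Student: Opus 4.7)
The plan is to verify (J1)-(J4) directly, using the Hardy-Littlewood-Sobolev estimate $\mathscr{D}(v)\leq C\|v\|^{2p}$ from \eqref{eq2} together with the continuous trace embedding $\gamma(H)=H^{1/2}(\R^N)\hookrightarrow L^q(\R^N)$, which is available because $q<2N/(N-1)$ is subcritical. Throughout I abbreviate $A(v):=\mathscr{D}(v)$ and $B(v):=\int_{\R^N}\Ga(y)|\gamma(v)|^q\,dy$; both are nonnegative, and on $\cN$ the Nehari identity reads $\|v\|^2=A(v)-B(v)>0$.

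For \textbf{(J1)}, combining the two estimates above gives $\cI(v)\leq\frac{C_1}{2p}\|v\|^{2p}+\frac{C_2\|\Ga\|_\infty}{q}\|v\|^q$, and since both exponents $2p$ and $q$ strictly exceed $2$, on a sufficiently small sphere $\|v\|=r$ one has $\cE(v)\geq\tfrac12 r^2-o(r^2)>0$. For \textbf{(J2)} I compute
\[
\frac{\cI(t_n v_n)}{t_n^q}=\frac{t_n^{2p-q}}{2p}A(v_n)-\frac{1}{q}B(v_n),
\]
and then observe that strong convergence $v_n\to v$ in $H$ descends to the trace, so $A(v_n)\to A(v)$ and $B(v_n)\to B(v)<\infty$; for the Nehari-relevant $v$ (those with $\gamma(v)\neq 0$) one has $A(v)>0$, and since $2p-q>0$ while $t_n\to\infty$, the right-hand side diverges to $+\infty$.

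The main computation is \textbf{(J3)}. Fix $v\in\cN$ and study the fibering map $\phi(t):=\cE(tv)$:
\[
\phi(t)=\frac{t^2}{2}\|v\|^2-\frac{t^{2p}}{2p}A(v)+\frac{t^q}{q}B(v),\qquad \phi'(t)=t\bigl(\|v\|^2-F(t)\bigr),
\]
where $F(t):=t^{2p-2}A(v)-t^{q-2}B(v)$. Since $2<q<2p$, the function $F$ satisfies $F(0^+)=0$, $F(t)\to+\infty$, and has a unique interior minimum on $(0,\infty)$; hence the equation $F(t)=\|v\|^2>0$ has \emph{exactly one} positive root, and the Nehari identity pins it at $t=1$. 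At this unique critical point,
\[
\phi''(1)=-2(p-1)A(v)+(q-2)B(v)<0
\]
because $q-2<2(p-1)$ (from $q<2p$ and $p\geq 2$) combined with $B(v)<A(v)$. Together with $\phi(t)\to-\infty$ as $t\to+\infty$, this makes $t=1$ the strict global maximum of $\phi$ on $(0,\infty)$, and rearranging $\phi(t)<\phi(1)$ using $\cI'(v)(v)=\|v\|^2$ yields precisely (J3).

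Finally, \textbf{(J4)} is algebraic: substituting $A(v)=\|v\|^2+B(v)$ (valid on $\cN$) into the functional gives
\[
\cE(v)=\frac{p-1}{2p}\|v\|^2+\frac{2p-q}{2pq}B(v)\geq\frac{p-1}{2p}\|v\|^2,
\]
and the coefficient is strictly positive since $p\geq 2$. The delicate step in the argument is (J3): the Nehari identity must be used twice—first to identify $t=1$ as the unique positive root of $F(t)=\|v\|^2$ (matched against the increasing branch of $F$), and a second time to sign $\phi''(1)$; only the combination $q-2<2(p-1)$ together with $B(v)<A(v)$ gives the required negativity.
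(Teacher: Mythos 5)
Your argument is correct and, modulo reorganization, it is the paper's argument: (J1) from the Hardy--Littlewood--Sobolev bound $\mathscr{D}(v)\le C\|v\|^{2p}$ on a small sphere (the paper simply drops the nonpositive $\Ga$-contribution to $\cI$ rather than estimating it), (J2) from homogeneity, (J3) from a one-variable analysis of the fibering map, and (J4) from the Nehari identity. Two small differences are worth recording. In (J4) the paper eliminates the $\Ga$-term, writing $\cE(v)=\left(\tfrac12-\tfrac1q\right)\|v\|^2+\left(\tfrac1q-\tfrac1{2p}\right)\mathscr{D}(v)$ on $\cN$, while you eliminate $\mathscr{D}(v)$; both give coercivity. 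In (J3) the paper sets $\varphi(t)=\tfrac{t^2-1}{2}\cI'(v)(v)-\cI(tv)+\cI(v)$ and signs $\varphi'(t)$ directly from the Nehari inequality $\int_{\R^N}\Ga(y)|\gamma(v)|^q\,dy<\tfrac1{2p}\mathscr{D}'(v)(v)$ together with $2<q<2p$; since $\varphi(t)=\cE(tv)-\cE(v)$ for $v\in\cN$, this is exactly your claim that $t=1$ is the strict global maximum of $t\mapsto\cE(tv)$, and your monotonicity analysis of $F$ reaches the same conclusion (the computation of $\phi''(1)$ is then redundant, since you already know the sign of $\phi'$ on each side of $t=1$; note also that when $B(v)=0$ the function $F$ has no interior minimum, but the unique-root conclusion still holds). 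Finally, your parenthetical restriction in (J2) to $v$ with $\gamma(v)\neq0$ makes explicit a point the paper passes over silently: for $v\in H\setminus\{0\}$ with vanishing trace one has $\cI(tv)\equiv0$, so the divergence required in (J2) fails as literally stated; neither proof addresses this, and the abstract theorem is in effect being applied only along rays that actually meet $\cN$.
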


\begin{proof}
\begin{enumerate}
\item[(J1)] In view of \eqref{eq2}
\[
\cI(v) \leq \frac{1}{2p} \mathscr{D}(v) \leq C \| v\|^{2p} = C \|v\|^{2p-2} \|v\|^2.
\]
For $v \in H$ such that $\|v\| \leq \left( \frac{1}{4 C } \right)^{1/(2p-2)} =: r$ we have
\[
\cI(v) \leq \frac{1}{4} \|v\|^2.
\]
Hence $\cE(v) \geq \frac{1}{4}\|v\|^2 = \frac{r^2}{4} > 0$ for $\|v\| = r$.
\item[(J2)] Observe that
\begin{multline*}
\cI(t_n v_n) / t_n^q = \frac{\mathscr{D}(t_n v_n)}{t_n^q} - \frac{1}{q} \int_{\R^N} \Gamma(x) |\gamma(v_n)|^q \, dy \\
= t_n^{2p-q} \mathscr{D}(v_n) - \frac{1}{q} \int_{\R^N} \Gamma(x) |\gamma(v_n)|^q \, dy \to \infty.
\end{multline*}
\item[(J3)] Fix $v \in \cN$ and define
\[
\varphi(t) := \frac{t^2-1}{2} \cI'(v)(v) - \cI(tv) + \cI(v)
\]
for any $t \geq 0$. Note that $\varphi(1) = 0$. Moreover
\begin{align*}
\varphi'(t) &= t \cI'(v)(v) - \cI'(tv)(v) = t \left( \frac{1}{2p} \mathscr{D}'(v)(v) - \int_{\R^N} \Gamma(x) |\gamma(v_n)|^q \, dy \right) \\
&{}\quad - \frac{1}{2p} \mathscr{D}'(tv)(v) + t^{q-1} \int_{\R^N} \Gamma(x) |\gamma(v)|^q \, dy \\
&= \left( t - t^{2p-1} \right) \frac{1}{2p} \mathscr{D}'(v)(v) + \left( t^{q-1} - t \right) \int_{\R^N} \Ga (x) |\gamma (v)|^q \, dy.
\end{align*}
Since $v \in \cN$, we have
\[
\int_{\R^N} \Gamma(y) |\gamma(v)|^q \, dy < \frac{1}{2p} \mathscr{D}'(v)(v).
\]
For $t > 1$ we have $t^{q-1} - t > 0$ and
\[
\varphi'(t) < \left( t^{q-1} - t^{2p-1} \right) \frac{1}{2p} \mathscr{D}'(v)(v) < 0.
\]
Similarly $\varphi'(t) > 0$ for $t < 1$.
\item[(J4)] Suppose that $\{v_n\}_n \subset \cN$ and $\|v_n\| \to \infty$. Note that
\begin{align*}
\cE(v_n) &= \frac{1}{2} \|v_n\|^2 - \frac{1}{2p} \mathscr{D}(v_n) + \frac{1}{q} \int_{\R^N} \Gamma(y) |\gamma(v_n)|^q \, dy =
\\ &= \left( \frac{1}{2} - \frac{1}{q} \right) \|v_n\|^2 + \left( \frac{1}{q} - \frac{1}{2p} \right) \mathscr{D}(v_n) \geq \left( \frac{1}{2} - \frac{1}{q} \right) \|v_n\|^2 \to \infty.
\end{align*}
\end{enumerate}
\end{proof}

\begin{remark}\label{rem-nehari}
From the proof of \cite[Theorem 2.1]{BieganowskiMederski} there holds that for every $v \in H \setminus \{0\}$ there is unique $t = t(v) > 0$ such that $tv \in \cN$. Moreover $t(v) > 0$ is the unique maximum of
$$
[0,\infty) \ni \tau \mapsto \cE (\tau v) \in \R,
$$
in particular $\sup_{\tau \geq 0} \cE(\tau v) = \cE(t(v)v)$.
\end{remark}


Define the ground state level
\[
c := \inf_\cN \cE.
\]
In view of Theorem \ref{ThSetting} we have $c > 0$ and there is bounded minimizing sequence $\{v_n\}_n \subset \cN$, i.e.
\[
\cE(v_n) \to c, \quad \cE'(v_n) \to 0.
\]
Since $\mathcal{N}$ contains any critical point of $\mathcal{E}$, it suffices to prove that the value $c$ is attained.
Without loss of generality we may assume that $v_n \weakto v_0$ in $H$ and that
\(
\cE'(v_0) = 0.
\)
Indeed, the sequence $\{v_n\}_n$ is easily seen to be bounded in $H$, so that it converges --- up to a subsequence --- to some $v_0 \in H$.
Fix any $\varphi \in C_0^\infty (\R^{N+1}_+)$: then
\begin{align*}
\cE'(v_n)(\varphi) = \langle v_n, \varphi \rangle - \frac{1}{2p} \mathscr{D}'(v_n)(\varphi) +  \int_{\R^N} \Ga (y) |\gamma(v_n)|^{q-2} \gamma(v_n) \gamma(\varphi) \, dy.
\end{align*}
In view of the weak convergence, we have $\langle v_n, \varphi \rangle \to \langle v_0,\varphi \rangle$.
Recall that $\gamma(\varphi) \in C_0^\infty (\R^N)$. Moreover, for any measurable set $E \subset \supp (\gamma(\varphi))$ we obtain
\[
\int_E | \Gamma (y) | |\gamma(v_n)|^{q-1} |\gamma(\varphi)| \, dy \leq |\Gamma|_\infty |\gamma(v_n)|_{q}^{q-1} |\gamma(\varphi) \chi_E |_q.
\]
Hence, in view of Vitali convergence theorem we have
\[
\int_{\R^N} \Ga (y) |\gamma(v_n)|^{q-2} \gamma(v_n) \gamma(\varphi) \, dy \to \int_{\R^N} \Ga (y) |\gamma(v_0)|^{q-2} \gamma(v_0) \gamma(\varphi) \, dy.
\]
From Lemma \ref{weakContDprime} we obtain
\begin{align*}
\mathscr{D}'(v_n)(\varphi) \to \mathscr{D}'(v_0)(\varphi).
\end{align*}

Hence $\cE'(v_n)(\varphi) \to \cE'(v_0)(\varphi) = 0$ and we need to check that $v_0 \neq 0$.

\begin{lemma}\label{lem3.3}
Suppose that there exists~$r > 0$ such that
\[
\lim_{n \to +\infty}\sup_{z \in \R_+^{N+1}} \iint_{B(z,r)} |v_n-v_0|^2 \, dx \, dy  = 0.
\]
Then $\{v_n\}_n$ converges strongly to $v_0$ in $H$ and $v_0 \neq 0$. In particular,  $v_0$ is a ground state for $\cE$.
\end{lemma}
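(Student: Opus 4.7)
The plan is to turn the Lions-type vanishing hypothesis into a strong $L^r$ convergence of the traces, use this to pass to the limit in the two nonlinear terms of $\cE$, and then derive strong convergence $v_n\to v_0$ in $H$ from $\cE'(v_n)(v_n)\to 0$ together with $\cE'(v_0)(v_0)=0$.

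First I would invoke the standard Lions concentration–compactness lemma in the bulk: since $w_n := v_n - v_0$ is bounded in $H^1(\R^{N+1}_+)$ and
\[
\sup_{z \in \R_+^{N+1}} \iint_{B(z,r)} |w_n|^2 \, dx\, dy \to 0,
\]
it follows that $w_n \to 0$ in $L^s(\R^{N+1}_+)$ for every $s \in (2, 2(N+1)/(N-1))$. Plugging this into the trace inequality \eqref{eq:1.2} with exponent $r$ in place of $p$ (and using that $2(r-1)$ lies in the above Sobolev range exactly when $r\in (2, 2N/(N-1))$) yields $\gamma(w_n) \to 0$ in $L^r(\R^N)$ for every $r \in (2, 2N/(N-1))$. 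Thanks to assumption (N), this range contains both $r=q$ and $r = 2Np/(N+\alpha)$: for the latter, $2Np/(N+\alpha) > 2$ corresponds to $\alpha < N(p-1)$ (implied by $\alpha<N$ and $p\geq 2$) while $2Np/(N+\alpha) < 2N/(N-1)$ corresponds to $\alpha > (N-1)p-N$, which is exactly the lower bound in (N).

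Next I would use these convergences to pass to the limit in the nonlinear terms. For the local term, boundedness of $\Gamma$ and $L^q$ convergence of $\gamma(v_n)$ to $\gamma(v_0)$ give
\[
\int_{\R^N} \Gamma(y) |\gamma(v_n)|^q \, dy \to \int_{\R^N} \Gamma(y) |\gamma(v_0)|^q \, dy,
\]
and an analogous argument shows that $\int \Gamma |\gamma(v_n)|^{q-2}\gamma(v_n)\gamma(v_n) \to \int \Gamma |\gamma(v_0)|^q$. For the nonlocal term, the Hardy–Littlewood–Sobolev inequality (together with the Brezis–Lieb-type splitting developed in Section \ref{sect:4}) shows that $v \mapsto \mathscr{D}(v)$ is continuous on $L^{2Np/(N+\alpha)}(\R^N)$ composed with $\gamma$, so $\mathscr{D}(v_n) \to \mathscr{D}(v_0)$ and $\mathscr{D}'(v_n)(v_n) \to \mathscr{D}'(v_0)(v_0)$.

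Combining these with $\cE'(v_n)(v_n)\to 0$ and $\cE'(v_0)(v_0)=0$,
\[
\|v_n\|^2 = \mathscr{D}(v_n) - \int_{\R^N} \Gamma(y)|\gamma(v_n)|^q\, dy + o(1) \longrightarrow \mathscr{D}(v_0) - \int_{\R^N}\Gamma(y)|\gamma(v_0)|^q\, dy = \|v_0\|^2,
\]
and since $H$ is a Hilbert space and $v_n \weakto v_0$, this forces $v_n \to v_0$ strongly. Strong convergence then gives $\cE(v_n)\to \cE(v_0)$, so $\cE(v_0)=c>0=\cE(0)$, whence $v_0 \neq 0$. Together with $\cE'(v_0)=0$, this means $v_0\in\cN$ with $\cE(v_0)=\inf_\cN \cE$, i.e.~$v_0$ is a ground state. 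The main technical point is the passage from the bulk vanishing in $\R^{N+1}_+$ to vanishing of the traces in $L^r(\R^N)$; once the trace inequality \eqref{eq:1.2} is combined with Lions' lemma, the remainder is bookkeeping with HLS and the Nehari identity.
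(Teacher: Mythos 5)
Your argument is correct, and the first half (Lions' lemma in the bulk combined with the trace inequality \eqref{eq:1.2} to get $\gamma(v_n)\to\gamma(v_0)$ in $L^t(\R^N)$ for $t\in(2,2N/(N-1))$, together with the exponent checks for $q$ and $2Np/(N+\alpha)$) coincides with the paper's. Where you diverge is in how the strong convergence is extracted. The paper tests $\cE'(v_n)$ and $\cE'(v_0)$ against the difference $v_n-v_0$ and estimates $\|v_n-v_0\|^2$ directly, bounding $\mathscr{D}'(v_n)(v_n-v_0)$ and $\mathscr{D}'(v_0)(v_n-v_0)$ via the decomposition $I_\alpha=I_1+I_2$ of Lemma \ref{lemmaIntegr} and H\"older, so that everything is controlled by $|\gamma(v_n-v_0)|_p\to 0$. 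You instead test against $v_n$ itself, pass to the limit in the full quantities $\mathscr{D}(v_n)$ and $\int\Gamma|\gamma(v_n)|^q$ (using the $2p$-homogeneity $\mathscr{D}'(v)(v)=2p\,\mathscr{D}(v)$ and the continuity of $\mathscr{D}$ on $L^{2Np/(N+\alpha)}$ via Hardy--Littlewood--Sobolev), and conclude $\|v_n\|\to\|v_0\|$, which together with $v_n\weakto v_0$ gives strong convergence in the Hilbert space $H$. Both routes are sound; yours avoids the kernel splitting $I_\alpha=I_1+I_2$ at the price of needing the (easy, but not literally the Brezis--Lieb splitting of Section \ref{sect:4}) continuity statement for $\mathscr{D}$ on $L^{2Np/(N+\alpha)}$ — I would spell that out with the elementary inequality $\bigl||u|^p-|w|^p\bigr|\le p\left(|u|^{p-1}+|w|^{p-1}\right)|u-w|$ followed by H\"older rather than citing the splitting lemma. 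Your closing observation that $\cE(v_0)=c>0$ forces $v_0\neq 0$ is also the right way to finish, and is in fact more explicit than the paper on that point.
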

\begin{proof}
From Lions' lemma \cite[Lemma I.1]{lions} we obtain that
\[
v_n \to v_0 \ \mathrm{in} \ L^{2(t-1)} (\R^{N+1}_{+}), \quad \gamma(v_n) \to \gamma(v_0) \ \mathrm{in} \ L^{t} (\R^N) \quad \ \mathrm{for} \ \mathrm{any} \ t \in \left(2, \frac{2N}{N-1} \right).
\]
Then we can show that $v_n \to v_0$ in $H$. Indeed, let us write
\begin{gather*}
 \cE'(v_n)(v_n - v_0) = \langle v_n, v_n - v_0 \rangle - \frac{1}{2p} \mathscr{D}'(v_n)(v_n - v_0) + \int_{\R^N} \Ga(y) |\gamma(v_n)|^{q-2} \gamma(v_n) \gamma(v_n-v_0) \, dy \\
= \langle v_n - v_0, v_n - v_0 \rangle + \langle v_0, v_n - v_0 \rangle \\
{}- \frac{1}{2p} \mathscr{D}'(v_n)(v_n - v_0) + \int_{\R^N} \Ga(y) |\gamma(v_n)|^{q-2} \gamma(v_n) \gamma(v_n-v_0) \, dy.
\end{gather*}
Therefore
\begin{multline}\label{3.1}
\| v_n - v_0 \|^2 =
\cE'(v_n)(v_n - v_0) - \langle v_0, v_n - v_0 \rangle + \frac{1}{2p} \mathscr{D}'(v_n)(v_n - v_0) \\
{} - \int_{\R^N} \Ga(y) |\gamma(v_n)|^{q-2} \gamma(v_n) \gamma(v_n-v_0) \, dy.
\end{multline}
Summing up \eqref{3.1} and
\begin{multline*}
0 = \cE'(v_0) ( v_n - v_0) = \\
\langle v_0, v_n - v_0 \rangle - \frac{1}{2p} \mathscr{D}'(v_0)(v_n - v_0) + \int_{\R^N} \Ga(y) |\gamma(v_0)|^{q-2} \gamma(v_0) \gamma(v_n-v_0) \, dy
\end{multline*}
we see that
\begin{align*}
\|v_n-v_0\|^2 &= \cE'(v_n)(v_n - v_0) + \frac{1}{2p} (\mathscr{D}'(v_n) - \mathscr{D}'(v_0))(v_n-v_0) \\ &\quad - \int_{\R^N} \Gamma(y) \left( |\gamma(v_n)|^{q-2} \gamma(v_n) - |\gamma(v_0)|^{q-2} \gamma(v_0) \right) \gamma(v_n-v_0) \, dy.
\end{align*}
Since $\{v_n\}_n$ is a bounded Palais-Smale sequence we have
\begin{equation}\label{3.3}
|\cE'(v_n)(v_n - v_0)| \leq \| \cE'(v_n) \| \|v_n - v_0\| \to 0.
\end{equation}
Taking the H\"older inequality into consideration we get
\begin{align}
\label{3.4}\left| \int_{\R^N} \Ga (y) |\gamma(v_n)|^{q-2} \gamma(v_n) \gamma(v_n-v_0) \, dy \right| \leq |\Ga|_\infty |\gamma(v_n)|_q^{q-1} |\gamma(v_n-v_0)|_q \to 0, \\
\label{3.5}\left| \int_{\R^N} \Ga (y) |\gamma(v_0)|^{q-2} \gamma(v_0) \gamma(v_n-v_0) \, dy \right| \leq |\Ga|_\infty |\gamma(v_0)|_q^{q-1} |\gamma(v_n-v_0)|_q \to 0.
\end{align}
Combining \eqref{3.3}, \eqref{3.4} and \eqref{3.5} we have
\[
\|v_n - v_0 \|^2 = \frac{1}{2p} (\mathscr{D}'(v_n)(v_n-v_0) - \mathscr{D}'(v_0)(v_n-v_0)) + o(1).
\]
Recall the decomposition \eqref{eq:decompI}, and observe that
\begin{align*}
\allowbreak
| \mathscr{D}'(v_n)(v_n - v_0) | &\leq \int_{\R^N} \left| ( I_\alpha * |\gamma(v_n)|^p ) (y) \right| |\gamma(v_n)(y)|^{p-1} |\gamma(v_n-v_0)(y)| \, dy \\
&\leq \int_{\R^N} \left| ( I_1 * |\gamma(v_n)|^p ) (y) \right| |\gamma(v_n)(y)|^{p-1} |\gamma(v_n-v_0)(y)| \, dy \\
&\quad + \int_{\R^N} \left| ( I_2 * |\gamma(v_n)|^p ) (y) \right| |\gamma(v_n)(y)|^{p-1} |\gamma(v_n-v_0)(y)| \, dy  \\
&\leq \left( C |I_1|_t |\gamma(v_n)|_{\frac{2tp}{2t-1}}^p + |I_2|_\infty |\gamma(v_n)|_p^p \right) \int_{\R^N} |\gamma(v_n)|^{p-1} |\gamma(v_n-v_0)| \, dy \\
&\leq \left( C |I_1|_t |\gamma(v_n)|_{\frac{2tp}{2t-1}}^p + |I_2|_\infty |\gamma(v_n)|_p^p \right) |\gamma (v_n)|_p^{p-1} |\gamma(v_n-v_0)|_p,
\end{align*}
where $t$ satisfies \eqref{1.8}. Similarly
\[
\left|\mathscr{D}'(v_0) (v_n-v_0)\right| \leq C(p,\alpha,v_0,t) |\gamma(v_n-v_0)|_{p} \to 0.
\]
Hence $\|v_n - v_0\|^2 \to 0$ and the proof is completed.
\end{proof}

\begin{lemma}\label{lem3.4}
Suppose that there exist $r > 0$, $\alpha > 0$ and a sequence
$(z_n) = (x_n, y_n) \subset \mathbb{Z}^{N+1}_{+}$,
where $\mathbb{Z}^{N+1}_{+} := \mathbb{R}^{N+1}_+ \cap \mathbb{Z}^{N+1}$,
such that
\begin{equation}\label{3.6}
\liminf_{n\to+\infty} \iint_{B(z_n, r)} |v_n-v_0|^2 \, dx \, dy \geq \alpha.
\end{equation}
Then $(z_n)$ is unbounded.
\end{lemma}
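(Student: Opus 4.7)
The plan is to argue by contradiction. Suppose that $(z_n)\subset \mathbb{Z}^{N+1}_{+}$ is bounded. Because $\mathbb{Z}^{N+1}_{+}$ is a discrete subset of $\mathbb{R}^{N+1}$, a bounded sequence in $\mathbb{Z}^{N+1}_{+}$ takes only finitely many values, so we may pass to a subsequence and assume that $z_n = z_0$ is constant.

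With $z_n=z_0$ fixed, the set $\Omega := B(z_0,r) \cap \mathbb{R}^{N+1}_{+}$ is a bounded Lipschitz domain, independent of $n$. Since $v_n \weakto v_0$ in $H = H^1(\mathbb{R}_{+}^{N+1})$, the sequence $\{v_n\}_n$ is bounded in $H^1(\Omega)$, and by the Rellich--Kondrachov compact embedding $H^1(\Omega) \hookrightarrow\hookrightarrow L^2(\Omega)$ we obtain, after extracting a further subsequence, that $v_n \to v_0$ strongly in $L^2(\Omega)$. Consequently
\begin{equation*}
\iint_{B(z_n,r)} |v_n - v_0|^2 \, dx\, dy = \int_\Omega |v_n-v_0|^2\, dx\, dy \longrightarrow 0,
\end{equation*}
which contradicts the hypothesis \eqref{3.6} that this quantity has $\liminf$ at least $\alpha>0$. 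Therefore $(z_n)$ cannot be bounded.

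There is no real obstacle here: the argument reduces to the standard combination of discreteness of the integer lattice and local compactness of the Sobolev embedding. The only mild care needed is to note that intersecting the Euclidean ball $B(z_0,r)$ with the half-space $\mathbb{R}^{N+1}_{+}$ still produces a bounded Lipschitz set, so that Rellich--Kondrachov applies unchanged.
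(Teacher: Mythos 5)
Your proof is correct and follows essentially the same route as the paper's: contradiction, reduction of the bounded lattice sequence to a (sub)sequence on a fixed ball, and the Rellich--Kondrachov compact embedding to force $v_n \to v_0$ in $L^2$ there, contradicting \eqref{3.6}. Your version is if anything slightly more careful than the paper's, which merely extracts a convergent subsequence of $(z_n)$ and invokes $L^2_{\mathrm{loc}}$ compactness.
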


\begin{proof}
Suppose by contradiction that $(z_n)$ is bounded. Then $(z_n)$ has a convergent subsequence, that we still denote by the same symbol. In view of the compact embedding of $H^1 (B(z,r))$ into $L^2 (B(z,r))$ we have $v_n \to v_0$ in $L^2_{\mathrm{loc}} (\R^{N+1}_+)$, a contradiction with \eqref{3.6}.
\end{proof}

\section{Profile decomposition of bounded Palais-Smale sequences}\label{sect:4}

We put
\begin{equation}\label{Eper}
\cE_{\mathrm{per}} (v) := \cE(v) - \frac{1}{2} \int_{\R^N} V_{l}(y) \gamma(y)^2 \, dy.
\end{equation}


Let us recall the classical Brezis-Lieb lemma (see eg. \cite[Proposition 4.7.30]{Bogachev}).

\begin{lemma}\label{lem:Brezis-Lieb}
Let $1 \leq p \leq r$, and $\{u_n\}_n \subset  L^r(\R^N)$ be a bounded sequence such that $u_n(x) \to u_0 (x)$ for a.e. $x \in \R^N$. Then
$$
\int_{\R^N} \left| |u_n|^p - |u_n - u_0|^p - |u_0|^p \right|^{r / p} \, dx \to 0 \quad \mathrm{as} \ n \to +\infty.
$$
\end{lemma}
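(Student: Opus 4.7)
The plan is the classical Brezis--Lieb strategy, pivoting on an elementary pointwise inequality. Concretely, I would first establish that for every $\varepsilon > 0$ there exists $C_\varepsilon > 0$ such that
\[
\bigl| |a+b|^{p} - |a|^{p} - |b|^{p} \bigr|^{r/p} \leq \varepsilon\,|a|^{r} + C_\varepsilon\,|b|^{r} \qquad \text{for all } a,b \in \mathbb{R}.
\]
After dividing by $|a|^{r}$ (WLOG $a \neq 0$) and setting $t := b/a$, this reduces to controlling $\psi(t) := \bigl||1+t|^{p} - 1 - |t|^{p}\bigr|^{r/p}$: since $\psi$ is continuous with $\psi(0) = 0$, one has $\psi(t) \leq \varepsilon$ for $|t|$ small, while $\psi(t) \leq C(1 + |t|^{r})$ for $|t|$ large lets the $C_\varepsilon |t|^{r}$ term absorb it.

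With this inequality in hand, the proof has three short steps. First, Fatou's lemma applied to $|u_n|^{r}$ yields $u_0 \in L^{r}(\R^N)$, and hence a uniform bound $M := \sup_n \|u_n - u_0\|_{r}^{r} < \infty$. Second, substituting $a = u_n(x) - u_0(x)$, $b = u_0(x)$ in the pointwise inequality gives, for the integrand
\[
W_n(x) := \bigl| |u_n(x)|^{p} - |u_n(x)-u_0(x)|^{p} - |u_0(x)|^{p} \bigr|^{r/p},
\]
the bound $W_n(x) \leq \varepsilon\,|u_n(x)-u_0(x)|^{r} + C_\varepsilon\,|u_0(x)|^{r}$. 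Third, define the truncation $f_n^{\varepsilon} := \bigl( W_n - \varepsilon\,|u_n - u_0|^{r}\bigr)^{+}$. Then $0 \leq f_n^{\varepsilon} \leq C_\varepsilon\,|u_0|^{r} \in L^{1}(\R^N)$, and $f_n^{\varepsilon} \to 0$ a.e.\ because $u_n \to u_0$ a.e.\ forces $W_n \to 0$ a.e. Dominated convergence gives $\int_{\R^N} f_n^{\varepsilon}\,dx \to 0$.

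Combining these facts via $W_n \leq f_n^{\varepsilon} + \varepsilon\,|u_n - u_0|^{r}$ and integrating yields
\[
\limsup_{n \to \infty} \int_{\R^N} W_n\,dx \leq \varepsilon\,M,
\]
and letting $\varepsilon \to 0^{+}$ closes the argument. The only genuine obstacle is the pointwise inequality; once it is secured, everything else is a standard truncation argument combining Fatou's lemma with dominated convergence, and the hypothesis $1 \leq p \leq r$ enters only to make $|u_n-u_0|^{r}$ uniformly integrable in a mass-bounded sense.
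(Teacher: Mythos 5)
Your argument is correct and is precisely the classical Brezis--Lieb truncation proof (elementary pointwise inequality, Fatou to get $u_0\in L^r(\R^N)$, dominated convergence on the truncated part, then $\varepsilon\to 0^+$); the paper does not reprove the lemma but simply cites it as \cite[Proposition 4.7.30]{Bogachev}, whose proof is exactly this argument. Nothing to add.
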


To provide the profile decomposition of bounded Palais-Smale sequences we need the following Brezis-Lieb-type splitting result.

\begin{lemma}[Brezis-Lieb-type lemma for $\mathscr{D}$]\label{lem:brezis-D}
Suppose that $\{v_n\}_n \subset H$ is a bounded sequence such that $v_n(x) \to v_0(x)$ for a.e. $x\in\R^N$. Then
\begin{equation*}
\mathscr{D}(v_n - v_0) - \mathscr{D}(v_n) + \mathscr{D}(v_0)   \to 0 \ \mbox{as} \ n \to +\infty.
\end{equation*}
\end{lemma}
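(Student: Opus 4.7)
The plan is to reduce $\mathscr{D}$ to a symmetric bilinear form acting on the traces and then combine the classical Brezis--Lieb lemma (Lemma \ref{lem:Brezis-Lieb}) with the Hardy--Littlewood--Sobolev inequality. I would first set $u_n := \gamma(v_n)$, $u_0 := \gamma(v_0)$, and $w_n := u_n - u_0$, so that $\gamma(v_n - v_0) = w_n$. Boundedness of $\{v_n\}$ in $H$ plus continuity of the trace gives $\{u_n\}$ bounded in $H^{1/2}(\R^N)$, and under assumption (N) the Sobolev embedding places each $u_n$ in $L^{pr}(\R^N)$ with $r := 2N/(N+\alpha)$: indeed the chain $(N-1)p - N < \alpha < N$ together with $p \geq 2$ forces $pr \in [2, 2N/(N-1))$. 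Setting
\[
B(f,g) := \iint_{\R^N \times \R^N} \frac{f(y)\,g(y')}{|y-y'|^{N-\alpha}} \, dy \, dy',
\]
the HLS inequality yields $|B(f,g)| \leq C |f|_r |g|_r$, and $\mathscr{D}(v) = B(|\gamma(v)|^p, |\gamma(v)|^p)$.

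Next I would apply Lemma \ref{lem:Brezis-Lieb} to $\{u_n\}$ with exponents $p$ and $pr$---the hypothesis $p \leq pr$ reduces to $\alpha \leq N$, which is in (N)---to conclude
\[
r_n := |u_n|^p - |w_n|^p - |u_0|^p \longrightarrow 0 \quad \text{in } L^r(\R^N).
\]
Substituting $|u_n|^p = |w_n|^p + |u_0|^p + r_n$ into $\mathscr{D}(v_n)$ and using the symmetric bilinearity of $B$ yields the algebraic identity
\[
\mathscr{D}(v_n) - \mathscr{D}(v_n - v_0) - \mathscr{D}(v_0) = 2B(|w_n|^p, |u_0|^p) + 2B(r_n, |w_n|^p + |u_0|^p) + B(r_n, r_n).
\]
The two terms carrying $r_n$ vanish in the limit directly from HLS: $|w_n|^p$ and $|u_0|^p$ are uniformly bounded in $L^r$, so each is dominated by a multiple of $|r_n|_r \to 0$.

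The remaining cross term $B(|w_n|^p, |u_0|^p) = \int_{\R^N} (I_\alpha * |u_0|^p)\,|w_n|^p \, dy$ I would handle by a weak-convergence argument. HLS/Riesz-potential estimates put $I_\alpha * |u_0|^p$ in $L^{r'}(\R^N)$ (the Hölder dual of $L^r$), while $|w_n|^p$ is bounded in $L^r$ and $w_n \to 0$ almost everywhere, thanks to the standing hypothesis on pointwise convergence. The standard fact that an $L^r$-bounded sequence ($1 < r < \infty$) converging a.e.\ converges weakly to its a.e.\ limit then gives $|w_n|^p \weakto 0$ in $L^r$, so the duality pairing $B(|w_n|^p, |u_0|^p) \to 0$, completing the proof.

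The main obstacle is the exponent bookkeeping: one has to pin down a single $r$ that simultaneously (i) satisfies the HLS constraint $2/r = 1 + \alpha/N$, (ii) places $|u_n|^p$ into $L^r$ through the trace embedding $H^{1/2} \hookrightarrow L^{pr}$ with $pr$ strictly below the critical exponent $2N/(N-1)$, and (iii) meets the Brezis--Lieb compatibility $p \leq pr$. Assumption (N) is tailored precisely so that all three hold at once. After this bookkeeping, the rest of the argument is a routine expansion of the bilinear form combined with the a.e.-to-weak trick.
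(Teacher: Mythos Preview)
Your proposal is correct and follows essentially the same route as the paper's proof: both pass to the traces $u_n=\gamma(v_n)$, work with the HLS exponent $r=2N/(N+\alpha)$, invoke Lemma~\ref{lem:Brezis-Lieb} to get $|u_n|^p-|u_n-u_0|^p\to|u_0|^p$ strongly in $L^r$, and then kill the remaining cross term via $|u_n-u_0|^p\weakto 0$ in $L^r$ paired against a fixed element of $L^{r'}=L^{2N/(N-\alpha)}$. The only difference is cosmetic---you expand $B(|u_n|^p,|u_n|^p)$ by substituting $|u_n|^p=|w_n|^p+|u_0|^p+r_n$, whereas the paper writes $\mathscr{D}(v_n)-\mathscr{D}(v_n-v_0)=B(|u_n|^p-|w_n|^p,\,|u_n|^p-|w_n|^p)+2B(|u_n|^p-|w_n|^p,\,|w_n|^p)$---but the analytic ingredients are identical.
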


The proof is similar to the proof of \cite[Lemma 2.4]{Moroz} and we include it here for the reader's convenience.

\begin{proof}
Put $u_n := \gamma(v_n) \in H^{1/2}(\R^N)$. See that
\begin{align*}
&\quad \int_{\R^N} ( I_\alpha * |u_n|^p ) |u_n|^p \, dx - \int_{\R^N} (I_\alpha * |u_n - u_0|^p ) |u_n - u_0|^p \, dx \\
&= \int_{\R^N} (I_\alpha * (|u_n|^p - |u_n-u_0|^p)) (|u_n|^p - |u_n-u_0|^p) \, dx \\
&\quad + 2 \int_{\R^N} (I_\alpha * (|u_n|^p - |u_n-u_0|^p)) |u_n-u_0|^p \, dx.
\end{align*}
Let $r := \frac{2Np}{N+\alpha} < \frac{2N}{N-1}$. Obviously $r \geq 2$, since $p \geq 2$. In view of continuous embedding $H^{1/2} (\R^N) \subset L^r (\R^N)$ the sequence $\{u_n\}_n$ is bounded in $L^r (\R^N)$. In view of Lemma \ref{lem:Brezis-Lieb} we have
$$
|u_n|^p - |u_n - u_0|^p \to |u_0|^p \quad \mathrm{in} \ L^{\frac{2N}{N+\alpha}}.
$$
Taking Hardy-Littewood-Sobolev inequality (\cite[Theorem 4.3]{Lieb}) into account we get
$$
I_\alpha * \left( |u_n|^p - |u_n - u_0|^p \right) \to I_\alpha * |u_0|^p \quad \mathrm{in} \ L^{\frac{2N}{N-\alpha}} (\R^N).
$$
But, in view of pointwise convergence and boundedness of $\{|u_n - u_0|^p\}_n$ in $L^{\frac{2N}{N+\alpha}} (\R^N)$, we have that
$$
|u_n - u_0|^p \weakto 0 \quad \mathrm{in} \ L^{\frac{2N}{N+\alpha}} (\R^N).
$$
Therefore
$$
\int_{\R^N} (I_\alpha * (|u_n|^p - |u_n-u_0|^p)) |u_n-u_0|^p \, dx \to 0
$$
and
$$
\mathscr{D}(v_n) - \mathscr{D}(v_n - v_0) \to \mathscr{D}(v_0).
$$
\end{proof}

We provide a splitting-type lemma for bounded Palais-Smale sequences in the spirit of \cite{CotiZelati, JeanjeanTanaka}. The following profile decomposition of Palais-Smale sequences is crucial in the proof of the existence result. The approach is based on Lions' lemma and Brezis-Lieb-type results.

\begin{lemma}[Splitting-type lemma]\label{lem:splitting}
Let $\{v_n\}_n$ be a bounded Palais-Smale sequence. Then (up to a subsequence) there is an integer $\ell \geq 0$ and sequences $(z_n^k) = (x_n^k, y_n^k) \subset \mathbb{Z}^{N+1}_+$, $w^k \in H^1 (\R^{N+1}_+)$, $k =1,\ldots, \ell$ such that
\begin{enumerate}
\item[(i)] $v_n \weakto v_0$ and $\cE'(v_0) = 0$;
\item[(ii)] $|y_n^k| \to \infty$ and $|y_n^k - y_n^{k'}| \to \infty$ for $k \neq k'$;
\item[(iii)] $w^k \neq 0$ and $\cE_{\mathrm{per}} ' (w^k) = 0$ for $1 \leq k \leq \ell$;
\item[(iv)] $\left\| u_n - u_0 - \sum_{k=1}^\ell w^k (\cdot - z_n^k) \right\| \to 0$;
\item[(v)] $\cE(v_n) \to \cE(v_0) + \sum_{k=1}^\ell \cE_{\mathrm{per}} (w^k)$.
\end{enumerate}
\end{lemma}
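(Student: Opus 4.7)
The plan is to adapt the classical profile decomposition argument (\cite{CotiZelati, JeanjeanTanaka}) to the half-space setting with a nonlocal nonlinearity by iteratively extracting concentration profiles until the remainder vanishes in Lions' sense. First, boundedness and reflexivity provide a subsequence along which $v_n \weakto v_0$ in $H$, $v_n \to v_0$ a.e.\ on $\R^{N+1}_+$, and $\gamma(v_n)\to \gamma(v_0)$ a.e.\ on $\R^N$. Passing $\cE'(v_n)(\varphi)\to 0$ to the limit on test functions $\varphi\in C_0^\infty(\R^{N+1}_+)$ --- using Lemma \ref{weakContDprime} for the nonlocal term and the Vitali-type argument from Section \ref{sect:3} for the $\Gamma$-term --- yields (i): $\cE'(v_0)=0$.

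Next, apply Lions' dichotomy to $r_n^1 := v_n - v_0$. In the vanishing alternative, Lemma \ref{lem3.3} gives strong convergence $v_n\to v_0$ and we take $\ell = 0$. Otherwise there exist $\alpha,r>0$ and $(z_n^1)\subset \mathbb{Z}^{N+1}_+$ with $\liminf_n \int_{B(z_n^1,r)}|r_n^1|^2\geq \alpha$; Lemma \ref{lem3.4} ensures $|z_n^1|\to\infty$. Since $V_l,V_p,\Gamma$ act only on the trace and only $y$-periodicity is available, after passing to a further subsequence we can arrange $x_n^1$ bounded and $|y_n^1|\to\infty$. Let $w^1$ be the weak $H$-limit of $r_n^1(\cdot + z_n^1)$; the local $L^2$-concentration combined with the compact embedding $H^1(B(0,r))\hookrightarrow L^2(B(0,r))$ yields $w^1\neq 0$. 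Translating the test in $\cE'(v_n)(\varphi(\cdot - z_n^1))\to 0$ and using that $V_p$ and $\Gamma$ are $\mathbb{Z}^N$-periodic while $V_l(\cdot + y_n^1)\to 0$ in $L^s(\R^N)$ (because $V_l\in L^s$ and $|y_n^1|\to\infty$), we obtain $\cE_{\mathrm{per}}'(w^1)=0$, which is (iii) for $k=1$.

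Iterate by setting $r_n^{k+1} := v_n - v_0 - \sum_{j=1}^{k} w^j(\cdot - z_n^j)$ and repeating the dichotomy on $r_n^{k+1}$. The orthogonality $|y_n^k-y_n^{k'}|\to\infty$ for $k\neq k'$ is automatic, for otherwise the profile at $z_n^{k+1}$ would already be captured by a previously subtracted $w^j$, contradicting the weak convergence $r_n^{k+1}(\cdot + z_n^{k+1})\weakto w^{k+1}\neq 0$. The Brezis--Lieb splitting of Lemma \ref{lem:brezis-D} for the nonlocal term, together with Lemma \ref{lem:Brezis-Lieb} applied to $|\gamma(\cdot)|^q$ and the standard norm identity, yields
\[
\cE(v_n) = \cE(v_0) + \sum_{j=1}^{k} \cE_{\mathrm{per}}(w^j) + \cE_{\mathrm{per}}(r_n^{k+1}) + o(1), \qquad \|v_n\|^2 = \|v_0\|^2 + \sum_{j=1}^{k}\|w^j\|^2 + \|r_n^{k+1}\|^2 + o(1).
\]
Since each $w^j$ is a nontrivial critical point of $\cE_{\mathrm{per}}$, a (J1)-type threshold applied to $\cE_{\mathrm{per}}$ gives $\|w^j\|\geq \rho>0$ uniformly in $j$; combined with $\sup_n\|v_n\|<\infty$ this forces the process to terminate after finitely many steps $\ell$, at which point $r_n^{\ell+1}$ satisfies the vanishing alternative. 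Lemma \ref{lem3.3} then delivers (iv) and the energy identity delivers (v).

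The main obstacle will be the bookkeeping of the nonlocal term across iterations: when passing $\cE'(r_n^{k+1})(\varphi(\cdot-z_n^{k+1}))\to 0$, cross terms of the shape $\int (I_\alpha*|\gamma(w^j(\cdot-z_n^j))|^p)\,|\gamma(w^{k+1})|^{p-1}\varphi$ and their analogues in $\mathscr{D}(r_n^{k+1})$ must vanish in the limit. This is guaranteed by the orthogonality $|y_n^j - y_n^{k+1}|\to\infty$ combined with the decomposition $I_\alpha = I_1+I_2$ from Lemma \ref{lemmaIntegr} and the Hardy--Littlewood--Sobolev inequality: the $I_1$-piece decouples by local integrability while the $I_2$-piece handles the long-range part by boundedness. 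Verifying this interaction lemma at each step --- and simultaneously showing that $r_n^{k+1}$ remains a Palais--Smale sequence for $\cE_{\mathrm{per}}$ up to $o(1)$ errors --- is the principal technical burden of the proof.
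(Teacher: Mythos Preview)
Your overall architecture matches the paper's, but there is a genuine gap at the step where you write ``after passing to a further subsequence we can arrange $x_n^1$ bounded and $|y_n^1|\to\infty$.'' This is not a matter of passing to a subsequence: the concentration centers $z_n^1=(x_n^1,y_n^1)$ are dictated by where the mass of $v_n-v_0$ sits, and the half-space $\R^{N+1}_+=\{x>0\}$ is \emph{not} invariant under $x$-translations, so you cannot simply slide the profile back toward the boundary. A priori it is perfectly possible that $(y_n^1)$ stays bounded while $x_n^1\to\infty$, i.e.\ mass escapes into the interior of the half-space, far from $\partial\R^{N+1}_+=\R^N$.

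The paper rules this scenario out by an explicit argument (its Step~3): one chooses a smooth cutoff $\psi_n$ supported in an annulus around $z_n$ contained entirely in the open half-space (possible precisely because $x_n\to\infty$), so that $\gamma(\psi_n v_n)=0$. Testing the Palais--Smale condition against $\psi_n v_n$ then kills every boundary term (the $V$-term, the $\Gamma$-term, and the nonlocal $\mathscr{D}'$-term), leaving
\[
o(1)=\cE'(v_n)(\psi_n v_n)=\iint_{\R^{N+1}_+}\bigl(|\nabla v_n|^2+m^2|v_n|^2\bigr)\psi_n\,dx\,dy+\iint_{\R^{N+1}_+} v_n\,\nabla v_n\cdot\nabla\psi_n\,dx\,dy,
\]
and a careful choice of the annulus width (using boundedness of $\|v_n\|$) bounds the cross term by $\beta/2$ while the main term is at least $\beta$, a contradiction. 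This is the one place where the half-space geometry genuinely bites, and it is the missing idea in your proposal. Without it, you have not established (ii) (that $|y_n^k|\to\infty$) and the subsequent use of $V_l(\cdot+y_n^1)\to 0$ is unjustified.
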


\begin{proof} $ $ \\
\textbf{Step 1:} \textit{(i) holds.} \\
Since $\{v_n\}_n$ is bounded, we may pass to a subsequence and assume that $v_n \weakto v_0$ in $H$.
We will show that that
\(
\cE'(v_0) = 0.
\)
Indeed, fix any $\varphi \in C_0^\infty (\R^{N+1}_+)$. Then
\begin{align*}
\cE'(v_n)(\varphi) = \langle v_n, \varphi \rangle - \frac{1}{2p} \mathscr{D}'(v_n)(\varphi) +  \int_{\R^N} \Ga (y) |\gamma(v_n)|^{q-2} \gamma(v_n) \gamma(\varphi) \, dy.
\end{align*}
In view of the weak convergence, we have $\langle v_n, \varphi \rangle \to \langle v_0,\varphi \rangle$. Recall that $\gamma(\varphi) \in C_0^\infty (\R^N)$. Moreover, for any measurable set $E \subset \supp (\gamma(\varphi))$ we obtain
\[
\int_E | \Gamma (y) | |\gamma(v_n)|^{q-1} |\gamma(\varphi)| \, dy \leq |\Gamma|_\infty |\gamma(v_n)|_{q}^{q-1} |\gamma(\varphi) \chi_E |_q.
\]
Hence, in view of Vitali convergence theorem we have
\[
\int_{\R^N} \Ga (y) |\gamma(v_n)|^{q-2} \gamma(v_n) \gamma(\varphi) \, dy \to \int_{\R^N} \Ga (y) |\gamma(v_0)|^{q-2} \gamma(v_0) \gamma(\varphi) \, dy .
\]
From Lemma \ref{weakContDprime}
$$
\mathscr{D}'(v_n)(\varphi) \to \mathscr{D}'(v_0)(\varphi)
$$
Hence $\cE'(v_n)(\varphi) \to \cE'(v_0)(\varphi) = 0$. \\ \\
\textbf{Step 2:} \textit{Assume that there exists $r > 0$ such that}
\begin{equation*}
\sup_{z \in \R_+^{N+1}} \iint_{B(z,r)} |v_n-v_0|^2 \, dx \, dy \to 0.
\end{equation*}
\textit{Then (ii)--(v) hold for $\ell = 0$.} \\
In view of Lemma \ref{lem3.3} we have $v_n \to v$ in $H$, $v_0 \neq 0$ and $v_0$ is a ground state for $\cE$. In particular, (ii)--(v) hold for $\ell = 0$. \\ \\
\textbf{Step 3:} \textit{Assume that there are $r > 0$ and $\alpha > 0$ and a sequence $(z_n) = (x_n, y_n) \subset \mathbb{Z}_+^{N+1}$ such that \eqref{3.6} holds. Then there is $w \in H$ such that (up to a subsequence)}
\begin{equation*}
(a) \ |y_n| \to \infty, \quad (b) \ v_n (\cdot - z_n) \weakto w \neq 0, \quad (c) \ \cE_{\mathrm{per}}'(w) = 0.
\end{equation*}
From Lemma \ref{lem3.4} we see that $(z_n)$ is unbounded. Suppose that $|y_n| \to \infty$. Put $w_n (z) := v_n(z-z_n) = v_n(x-x_n,y-y_n)$. See that
\begin{equation*}
\|w_n\| \leq C \| w_n\|_{H^1 (\R^{N+1}_+)} = C \| v_n \|_{H^1 (\R^{N+1}_+)},
\end{equation*}
hence $\{w_n\}_n$ is also bounded in $H$ and $w_n \weakto w$ in $H$, $w_n(z) \to w(z)$ for a.e. $z \in \R^{N+1}_+$. Observe that
\begin{align*}
\alpha &\leq \iint_{B((x_n,y_n),r)} |v_n-v_0|^2 \, dx \, dy = \iint_{B((0,0),r)} |w_n - v_0 (x+x_n,y+y_n)|^2 \, dx \, dy \\
&= \iint_{B((0,0),r)} |w_n|^2 \, dx \, dy - 2 \iint_{B(z_n,r)} v_n v_0 \, dx \, dy + \iint_{B(z_n,r)} |v_0|^2 \, dx \, dy.
\end{align*}
Obviously
\begin{align*}
\iint_{B(z_n,r)} v_n v_0 \, dx \, dy &\to 0, \\
\iint_{B(z_n,r)} |v_0|^2 \, dx \, dy &\to 0
\end{align*}
and therefore
\begin{equation*}
\alpha \leq \iint_{B(z_n,r)} |v_n-v_0|^2 \, dx \, dy \leq \iint_{B(0,r)} |w_n|^2 \, dx \, dy + o(1).
\end{equation*}
Hence $w \neq 0$ on $B((0,0),r)$, in particular $w \neq 0$ in $H$. We will show that $\cE_{\mathrm{per}}'(w) = 0$. Take any test function $\varphi \in C_0^\infty (\R^{N+1}_+)$. We compute
\begin{align*}
o(1) &= \cE' (v_n) (\varphi(\cdot + z_n)) \\ &= \cE' (w_n)(\varphi) - \int_{\R^N}  \phi_{\gamma(v_n)} (y) \gamma(\varphi)(y+y_n) \, dy + \int_{\R^N} \phi_{\gamma(w_n)} (y) \gamma(\varphi)(y) \, dy \\
&= \cE_{\mathrm{per}}' (w_n)(\varphi) + \int_{\R^N} V_l (y) \gamma(v_n) \gamma(\varphi(\cdot + z_n)) \, dy \\ &\quad - \int_{\R^N} \phi_{\gamma(v_n)} (y-y_n) \gamma(\varphi)(y) \, dy + \int_{\R^N}  \phi_{\gamma(w_n)} (y) \gamma(\varphi)(y) \, dy.
\end{align*}
In view of Lemma \ref{phiProperties} (ii) we have
\begin{multline*}
- \int_{\R^N} \phi_{\gamma(v_n)} (y-y_n) \gamma(\varphi)(y) \, dy + \int_{\R^N}  \phi_{\gamma(w_n)} (y) \gamma(\varphi)(y) \, dy \\
= - \int_{\R^N} \phi_{\gamma(w_n)} (y) \gamma(\varphi)(y) \, dy + \int_{\R^N}  \phi_{\gamma(w_n)} (y) \gamma(\varphi)(y) \, dy = 0,
\end{multline*}
so
\begin{align*}
o(1) &= \cE_{\mathrm{per}}' (w_n)(\varphi) + \int_{\R^N} V_l (y) \gamma(v_n) \gamma(\varphi(\cdot + z_n)) \, dy \\
&= \cE_{\mathrm{per}}' (w_n)(\varphi) + \int_{\R^N} V_l (y) \gamma(v_n) \gamma(\varphi)(y + y_n) \, dy \\
&= \cE_{\mathrm{per}}' (w_n)(\varphi) + \int_{\R^N} V_l (y-y_n) \gamma(w_n)(y) \gamma(\varphi)(y) \, dy.
\end{align*}
From Vitali convergence theorem we have
$$
\int_{\R^N} V_l (y-y_n) \gamma(w_n)(y) \gamma(\varphi)(y) \, dy \to 0,
$$
hence
$$
\cE_{\mathrm{per}}' (w_n)(\varphi) \to 0.
$$
Similarly as in Step 1 we show that $\cE_{\mathrm{per}}' (w_n)(\varphi) \to \cE_{\mathrm{per}}' (w)(\varphi)$ and therefore
$$
\cE_{\mathrm{per}}' (w)(\varphi) = 0,
$$
and the proof of Step 3 is completed in this case. Thus we may assume that $(y_n)$ is bounded and $x_n\to\infty$. We will show that this is impossible. We will repeat the arguments from the proof of \cite[Lemma 4.2]{Coti}. Since $(y_n)$ is bounded, it has a convergent subsequence. From \eqref{3.6} it follows that
\begin{gather*}
\iint_{B(z_n, r)} |v_n|^2 \, dx \, dy \geq \beta > 0
\end{gather*}
for some $\beta > 0$. We may assume without loss of generality that $r>2$ and that $y_n = 0$. Since $v_0 \in L^2 (\R^N)$ we have
\[
\iint_{B(z_n, r)} |v_0|^2 \, dx \, dy \to 0 \quad \mbox{as} \ n \to +\infty.
\]
Moreover, in view of boundedness of $\{v_n\}_n$ in $L^2(\R^N)$ we have
\begin{align*}
\iint_{B(z_n,r)} v_n v_0 \,dx\,dy \to 0 \quad \mbox{as} \ n \to +\infty
\end{align*}
and
\[
\iint_{B(z_n, r)} |v_n - v_0|^2 \, dx \, dy = \iint_{B(z_n, r)} |v_n|^2 \, dx \, dy + o(1).
\]
For any fixed $n \geq 1$ let $R_n\geq 1$ denotes the smallest integer such that
\[
\iint_{ \{ r + R_n \leq |(x,y)-z_n| \leq r + R_n + 1 \}} |\nabla v_n|^2 +  |v_n|^2 \,dx \, dy < \beta.
\]
While $x_n \to \infty$ and in view of boundedness of $\{v_n\}_n$ we may assume that $x_n$ is large enough that
\[
\{ r + R_n \leq |(x,y)-z_n| \leq r + R_n + 1 \} \subset \mathbb{R}^{N+1}_+,
\]
i.e.
\begin{equation}\label{xn}
x_n > r + M + 1,
\end{equation}
where
\[
R_n \leq \frac{1}{\beta} \iint_{\R^{N+1}_+} |\nabla v_n|^2 + |v_n|^2 \, dx \, dy \leq M.
\]
Define
\[
\psi_n (x,y) = \varphi_{r+R_n} (|(x,y)-z_n|),
\]
where $\varphi_{r+R_n} \in C^\infty_0 (\R^{N+1}_+)$ is such that
\[
\varphi_{r+R_n} (z) = \left\{ \begin{array}{ll}
1, & \ \mathrm{for} \ |z| \leq r+R_n, \\
0, & \ \mathrm{for} \ |z| > r+R_n + 1,
\end{array} \right.
\]
and $|\varphi_{r+R_n} (z)| \leq 1$, $|\nabla \varphi_{r+R_n} (z)| \leq 1$. Obviously $\psi_n v_n \in H^1 (\R^{N+1}_+)$. Moreover, while $\{v_n\}_n$ is a bounded Palais-Smale sequence
\[
\| \cE'(v_n)(\psi_n v_n) \| \leq \| \cE'(v_n) \| \cdot \|\psi_n v_n\| \to 0 \quad \mbox{as} \ n\to +\infty.
\]
Hence
\begin{multline*}
o(1) = \cE'(v_n)(\psi_n v_n) = \\
 \iint_{\R^{N+1}_+} \nabla v_n \nabla (\psi_n v_n) \, dx \, dy + m^2 \iint_{\R^{N+1}_+} |v_n|^2 \psi_n \, dx \, dy + \int_{\R^N} (V(y)-m) \gamma(v_n)\gamma(\psi_n v_n) \, dy \\
{}- \frac{1}{2p} \mathscr{D}'(v_n)(\psi_n v_n) + \int_{\R^N} \Ga(y) |\gamma(v_n)|^{q-2} \gamma(v_n) \gamma(\psi_n v_n) \, dy.
\end{multline*}
In view of the definition of trace and $\psi_n$, and taking \eqref{xn} into account, we have $\gamma(\psi_n v_n) = 0$. Hence
\begin{align*}
o(1) &= \cE'(v_n)(\psi_n v_n) = \iint_{\R^{N+1}_+} \nabla v_n \nabla (\psi_n v_n) \, dx \, dy + m^2 \iint_{\R^{N+1}_+} |v_n|^2 \psi_n \, dx \, dy  \\
&= \iint_{\R^{N+1}_+} |\nabla v_n|^2 \psi_n \, dx \, dy + m^2 \iint_{\R^{N+1}_+} |v_n|^2 \psi_n \, dx \, dy + \iint_{\R^{N+1}_+} v_n \nabla v_n \nabla \psi_n \, dx \, dy \\
&= \iint_{\R^{N+1}_+} (|\nabla v_n|^2 + m^2 |v_n|^2) \psi_n \, dx \, dy + \iint_{\R^{N+1}_+} v_n \nabla v_n \nabla \psi_n \, dx \, dy \\
&\geq \iint_{\R^{N+1}_+} (|\nabla v_n|^2 + m^2 |v_n|^2) \psi_n \, dx \, dy \\ &\quad - \iint_{\{ r + R_n \leq |(x,y)-z_n| \leq r + R_n + 1 \}} | v_n| |\nabla v_n| \, dx \, dy \\
&\geq \iint_{B(z_n,r)} |\nabla v_n|^2 + m^2 |v_n|^2 \, dx \, dy \\
&\quad - \frac{1}{2} \iint_{\{ r + R_n \leq |(x,y)-z_n| \leq r + R_n + 1 \}} |\nabla v_n|^2 + |v_n|^2 \, dx \, dy \\
&\geq \iint_{B(z_n,r)} |\nabla v_n|^2 + m^2 |v_n|^2 \, dx \, dy - \frac{\beta}{2} \geq \beta - \frac{\beta}{2} = \frac{1}{2} \beta > 0,
\end{align*}
a contradiction, which completes the proof of Step 3. \\ \\
\textbf{Step 4:} \textit{Assume that there is $m \geq 1$, $(z_n^k) = (x_n^k, y_n^k) \subset \mathbb{Z}_+^{N+1}$, $w^k \in H$ for $k \in \{1,2,\ldots, m\}$ such that
\begin{align*}
|y_n^k| \to \infty, \ |y_n^k - y_n^{k'}| \to \infty \ &\mbox{for} \ 1 \leq k < k' \leq m; \\
v_n^k (\cdot + z_n^k) \to w^k \neq 0 \ &\mbox{for} \ 1 \leq k \leq m; \\
\cE_{\mathrm{per}}' (w^k) = 0 \ &\mbox{for} \ 1 \leq k \leq m.
\end{align*}
Then
\begin{itemize}
\item[(1)] if there is $r > 0$ such that
\begin{equation}\label{3.8}
\sup_{z \in \R_+^{N+1}} \iint_{B(z,r)} \left| v_n - v_0 - \sum_{k=1}^m w^k(\cdot - z_n^k) \right|^2 \, dx \, dy \to 0 \ \mbox{as} \ n \to +\infty,
\end{equation}
then
$$
\left\| v_n - v_0 - \sum_{k=1}^m w^k(\cdot - z_n^k) \right\| \to 0;
$$
\item[(2)] if there is $r > 0$ and $(z_n^{m+1}) = (x_n^{m+1},y_n^{m+1}) \subset \mathbb{Z}_+^{N+1}$ such that
\begin{equation}\label{3.9}
\liminf_{n\to\infty}  \iint_{B(z_n^{m+1}, r)} \left| v_n - v_0 - \sum_{k=1}^m w^k(\cdot - z_n^k) \right|^2 \, dx \, dy > 0
\end{equation}
then there is $w^{m+1} \in H \setminus \{0\}$ such that (up to subsequences)
\begin{itemize}
\item[(a)]  $|y_n^{m+1}| \to \infty$, $|y_n^{m+1} - y_n^k| \to \infty$ for $1 \leq k \leq m$,
\item[(b)] $v_n (\cdot - z_n^{m+1}) \weakto w^{m+1}$,
\item[(c)] $\cE_{\mathrm{per}}'(w^{m+1}) = 0$.
\end{itemize}
\end{itemize}
}
Suppose that \eqref{3.8} holds and put
\[
\xi_n := v_n - v_0 - \sum_{k=1}^m w^k(\cdot - z_n^k).
\]
From Lion's lemma we have
\[
\xi_n \to 0 \ \mbox{in} \ L^{2(t-1)} (\R_+^{N+1}) \ \mbox{and} \ \gamma(\xi_n) \to 0 \ \mbox{in} \ L^t (\R^N) \quad \mbox{for} \ t \in \left(2, \frac{2N}{N-1} \right).
\]
We denote that
\[
\cE'(v_n)(\xi_n) = \langle v_n, \xi_n \rangle - \frac{1}{2p} \mathscr{D}'(v_n)(\xi_n) + \int_{\R^N} \Gamma(y) |\gamma(v_n)|^{q-2} \gamma(v_n) \gamma(\xi_n) \, dy.
\]
Obviously $|\cE'(v_n)(\xi_n)| \leq \| \cE'(v_n) \| \| \xi_n \| \to 0$ and
\[
\int_{\R^N} \Gamma(y) |\gamma(v_n)|^{q-2} \gamma(v_n) \gamma(\xi_n) \, dy \to 0,
\]
so
\begin{multline*}
o(1) = \langle v_n, \xi_n \rangle - \frac{1}{2p} \mathscr{D}'(v_n)(\xi_n) = \| \xi_n \|^2 + \langle v_0, \xi_n \rangle + \sum_{k=1}^m \langle w^k(\cdot - z_n^k), \xi_n \rangle - \frac{1}{2p} \mathscr{D}'(v_n)(\xi_n).
\end{multline*}
Moreover
\begin{equation}\label{3.10}
0 = \cE'(v_0)(\xi_n) = \langle v_0, \xi_n \rangle - \frac{1}{2p} \mathscr{D}'(v_0)(\xi_n) + o(1),
\end{equation}
while
\[
\int_{\R^N} \Gamma(y) |\gamma(v_0)|^{q-2} \gamma(v_0) \gamma(\xi_n) \, dy \to 0.
\]
Using \eqref{3.10} we obtain
\begin{equation}\label{3.11}
\| \xi_n \|^2 = - \frac{1}{2p} \mathscr{D}'(v_0)(\xi_n) - \sum_{k=1}^m \langle w^k(\cdot - z_n^k), \xi_n \rangle + \frac{1}{2p} \mathscr{D}'(v_n)(\xi_n) + o(1).
\end{equation}
Recall that $\cE_{\mathrm{per}}'(w^k) = 0$. Hence
\begin{align*}
0 &= \cE_{\mathrm{per}}'(w^k)(\xi_n(\cdot + z_n^k))= \langle w^k, \xi_n(\cdot + z_n^k) \rangle - \frac{1}{2p} \mathscr{D}'(w^k)(\xi_n (\cdot + z_n^k)) \\ &\quad + \int_{\R^N} \Ga(y) | \gamma(w^k)|^{q-2} \gamma(w^k) \gamma(\xi_n(\cdot + z_n^k)) \, dy - \frac{1}{2} \int_{\R^N} V_l(y) \gamma(w^k) \gamma(\xi_n(\cdot + z_n^k)) \, dy \\
&= \langle w^k (\cdot - z_n^k), \xi_n \rangle - \frac{1}{2p} \mathscr{D}'(w^k)(\xi_n (\cdot + z_n^k)) \\ &\quad + \int_{\R^N} \Ga(y) | \gamma(w^k)|^{q-2} \gamma(w^k) \gamma(\xi_n(\cdot + z_n^k)) \, dy - \frac{1}{2} \int_{\R^N} V_l(y) \gamma(w^k(\cdot - z_n^k)) \gamma(\xi_n) \, dy
\end{align*}
Combining it with \eqref{3.11} we have
\begin{multline*}
\| \xi_n\|^2 = - \frac{1}{2p} \sum_{k=1}^m \mathscr{D}'(w^k)(\xi_n (\cdot + z_n^k)) + \sum_{k=1}^m \int_{\R^N} \Ga(y) | \gamma(w^k)|^{q-2} \gamma(w^k) \gamma(\xi_n(\cdot + z_n^k)) \, dy \\ {}- \frac{1}{2}  \sum_{k=1}^m \int_{\R^N} V_l(y) \gamma(w^k(\cdot - z_n^k)) \gamma(\xi_n) \, dy + \frac{1}{2p} [ \mathscr{D}'(v_n)(\xi_n) - \mathscr{D}'(v_0)(\xi_n)] + o(1).
\end{multline*}
Note that
$$
\left| \int_{\R^N} \Ga(y) | \gamma(w^k)|^{q-2} \gamma(w^k) \gamma(\xi_n(\cdot + z_n^k)) \, dy \right| \leq |\Ga|_\infty \int_{\R^N} | \gamma(w^k(\cdot - z_n^k))|^{q-1} | \gamma(\xi_n) | \, dy \to 0.
$$
Thus
\begin{align*}
\| \xi_n\|^2 &= - \frac{1}{2p} \sum_{k=1}^m \mathscr{D}'(w^k)(\xi_n (\cdot + z_n^k)) - \frac{1}{2}  \sum_{k=1}^m \int_{\R^N} V_l(y) \gamma(w^k(\cdot - z_n^k)) \gamma(\xi_n) \, dy \\
&\qquad {}+ \frac{1}{2p} [ \mathscr{D}'(v_n)(\xi_n) - \mathscr{D}'(v_0)(\xi_n)] + o(1).
\end{align*}
From the Vitali convergence theorem we have
\begin{equation*}
\int_{\R^N} V_l(y) \gamma(w^k(\cdot - z_n^k)) \gamma(\xi_n) \, dy \to 0,
\end{equation*}
since $V_l(y + z_n^k) \to 0$ as $n \to +\infty$. Hence
\begin{equation*}
\| \xi_n\|^2 = - \frac{1}{2p} \sum_{k=1}^m \mathscr{D}'(w^k)(\xi_n (\cdot + z_n^k)) + \frac{1}{2p} [ \mathscr{D}'(v_n)(\xi_n) - \mathscr{D}'(v_0)(\xi_n)] + o(1).
\end{equation*}
Moreover
\begin{align*}
\mathscr{D}'(w^k)(\xi_n (\cdot + z_n^k)) &\to 0, \\
\mathscr{D}'(v_n)(\xi_n) &\to 0, \\
\mathscr{D}'(v_0)(\xi_n) &\to 0
\end{align*}
exactly as in Step 1. Thus $\xi_n \to 0$ in $H$.

Suppose now that \eqref{3.9} holds. Exactly as in Step 3 we obtain that $(z_n^{m+1}) = (x_n^{m+1}, y_n^{m+1})$ is unbounded and that $v_n (\cdot - z_n^{m+1}) \weakto w^{m+1}$ for some $w^{m+1} \in H \setminus \{0\}$. If $|y_n^{m+1}| \to \infty$, similarly as in Step 1 we show that
\begin{equation*}
\cE_{\mathrm{per}}'(v_n (\cdot - z_n^{m+1}))(\varphi) \to \cE_{\mathrm{per}}' (w^{m+1})(\varphi)
\end{equation*}
and on the other hand that $\cE_{\mathrm{per}}'(v_n (\cdot - z_n^{m+1}))(\varphi) \to 0$ and the proof is completed in this case. Hence we assume that $(y_n^{m+1})$ is bounded and therefore $x_n^{m+1} \to \infty$, however this cannot hold as in Step 3. \\ \\
\textbf{Step 5:} \textit{Conclusion.} \\
Directly from Step 1 we obtain (i). If
\begin{equation*}
\sup_{z \in \R_+^{N+1}} \iint_{B(z,r)} |v_n-v_0|^2 \, dx \, dy \to 0
\end{equation*}
then from Step 2 the lemma holds for $\ell = 0$. Otherwise, \eqref{3.6} holds and in view of Step 3 there is $w \in H$ and $(z_n) = (x_n, y_n)$ such that (a), (b), (c) hold true. Put $z_n^1 := z_n$ and $w^1 := w$. Then we iterate Step 4. Observe that, from the properties of the weak convergence
\begin{equation*}
0 \leq \lim_{n \to +\infty} \left\| v_n - v_0 - \sum_{k=1}^m w^k (\cdot-z_n^k) \right\|^2 = \lim_{n\to +\infty} \left( \|v_n\|^2 - \| v_0\|^2 - \sum_{k=1}^m \| w^k \|^2 \right).
\end{equation*}
Since $w^k$ are nontrivial critical points of $\cE_{\mathrm{per}}$, we find a number $\rho > 0$ such that $\|w^k\| \geq \rho$. Hence the procedure will finish after a finite number of steps, say $\ell$ steps. Thus we have proven (i)--(iv). \\ \\
\textbf{Step 6:} \textit{(v) holds.} \\
Observe that
\begin{align*}
\cE (v_n) &= \frac{1}{2} \langle v_n, v_n \rangle - \frac{1}{2p} \mathscr{D}(v_n) + \frac{1}{q} \int_{\R^N} \Ga (y) |\gamma(v_n)|^{q} \, dy \\
&= \frac{1}{2} \langle v_0, v_0 \rangle  + \frac{1}{2} \langle v_n - v_0, v_n - v_0 \rangle + \langle v_0, v_n - v_0 \rangle - \frac{1}{2p} \mathscr{D}(v_n) + \frac{1}{q} \int_{\R^N} \Ga (y) |\gamma(v_n)|^{q} \, dy \\
&= \cE (v_0) + \frac{1}{2} \langle v_n - v_0, v_n - v_0 \rangle + \langle v_0, v_n - v_0 \rangle \\
&\quad{}- \frac{1}{2p} \mathscr{D}(v_n) + \frac{1}{q} \int_{\R^N} \Ga (y) |\gamma(v_n)|^{q} \, dy + \frac{1}{2p} \mathscr{D}(v_0) - \frac{1}{q} \int_{\R^N} \Ga (y) |\gamma(v_0)|^{q} \, dy \\
&= \cE (v_0) + \cE_{\mathrm{per}} (v_n - v_0) + \langle v_0, v_n - v_0 \rangle \\
&\quad{}- \frac{1}{2p} \mathscr{D}(v_n) + \frac{1}{q} \int_{\R^N} \Ga (y) |\gamma(v_n)|^{q} \, dy + \frac{1}{2p} \mathscr{D}(v_0) - \frac{1}{q} \int_{\R^N} \Ga (y) |\gamma(v_0)|^{q} \, dy \\
&\quad{}+ \frac{1}{2p} \mathscr{D}(v_n-v_0) - \frac{1}{q} \int_{\R^N} \Ga (y) |\gamma(v_n - v_0)|^{q} \, dy + \frac{1}{2} \int_{\R^N} V_l (y) |\gamma(v_n - v_0)|^2 \, dy.
\end{align*}
In view of the weak convergence we have
$\langle v_0, v_n - v_0 \rangle \to 0$.
Let $E \subset \R^N$ be a measurable set. From the H\"older inequality and \eqref{eq:1.5V} we obtain
\begin{equation*}
\left| \int_{E} V_l (y) |\gamma(v_n - v_0)|^2 \, dy \right| \leq |V_l \chi_E|_s |\gamma(v_n-v_0)|_{\frac{2s}{s-1}}^{2}.
\end{equation*}
Since $2\leq 2s/(s-1)\leq 2N/(N-1)$ and $\{\gamma(v_n-v_0)\}_n$ is bounded in $H^{1/2} (\R^N)$, we obtain that the family $\{ V_l (y) |\gamma(v_n - v_0)|^2 \}_n$ is uniformly integrable and tight. Hence, in view of Vitali convergence theorem
\begin{gather*}
\frac{1}{2} \int_{\R^N} V_l (y) |\gamma(v_n - v_0)|^2 \, dy \to 0.
\end{gather*}
In view of the Bezis-Lieb lemma (\cite[Proposition 4.7.30]{Bogachev}) we easily get
\begin{multline*}
\left| \int_{\R^N} \Ga (y) |\gamma(v_n)|^{q} \, dy - \int_{\R^N} \Ga (y) |\gamma(v_0)|^{q} \, dy -  \int_{\R^N} \Ga (y) |\gamma(v_n - v_0)|^{q} \, dy \right| \\
\quad\leq |\Ga|_\infty \int_{\R^N} \left| |\gamma(v_n)|^{q} - |\gamma(v_0)|^{q} - |\gamma(v_n - v_0)|^{q} \right| \, dy \to 0.
\end{multline*}
Hence, it is enough to show that
\begin{align}
&\mathscr{D}(v_n) - \mathscr{D}(v_0) - \mathscr{D}(v_n - v_0) \to 0, \label{3.12} \\
&\cE_{\mathrm{per}} (v_n - v_0) \to \sum_{k=1}^\ell \cE_{\mathrm{per}}  (w^k). \label{3.13}
\end{align}
The convergence \eqref{3.12} follows by Lemma \ref{lem:brezis-D}. We will focus on \eqref{3.13}. We compute that
\begin{align*}
\cE_{\mathrm{per}} (v_n - v_0) &= \frac{1}{2} \| v_n - v_0 \|^2 - \frac{1}{2p} \mathscr{D}(v_n - v_0) \\
&\quad + \frac{1}{q} \int_{\R^N} \Gamma(y) | \gamma(v-v_0) |^q \, dy - \frac{1}{2} \int_{\R^N} V_{loc}(y) | \gamma(v-v_0) |^2 \, dy \\
&= \frac{1}{2} \left\| v_n - v_0 - \sum_{k=1}^\ell w^k (\cdot-z_n^k) \right\|^2 - \frac{1}{2p} \mathscr{D}(v_n - v_0) \\
&\quad + \frac{1}{q} \int_{\R^N} \Gamma(y) | \gamma(v-v_0) |^q \, dy - \int_{\R^N} \frac{1}{2} V_{loc}(y) | \gamma(v-v_0) |^2 \, dy \\
&\quad + \sum_{k=1}^\ell \| w^k (\cdot - z_n^k) \|^2 + o(1) \\
&= \sum_{k=1}^\ell \cE_{\mathrm{per}} (w^k) + \frac{1}{2p} \sum_{k=1}^\ell \mathscr{D}(w^k (\cdot - z_n^k)) - \frac{1}{q} \sum_{k=1}^\ell \int_{\R^N} \Ga (y) | \gamma(w^k(\cdot - z_n^k)) |^q \, dy \\
&\quad -\frac{1}{2p} \mathscr{D}(v_n - v_0) + \frac{1}{q} \int_{\R^N} \Ga (y) | \gamma(v-v_0)|^q \, dy + o(1).
\end{align*}
Iterating Lemma \ref{lem:brezis-D} we get
\begin{equation*}
\mathscr{D}(v_n-v_0) - \sum_{k=1}^\ell \mathscr{D}(w^k(\cdot - z_n^k)) \to 0.
\end{equation*}
Similarly, iterating the Brezis-Lieb lemma we get
\begin{equation*}
\int_{\R^N}  \Ga (y) | \gamma(v-v_0)|^q \, dy - \sum_{k=1}^\ell \int_{\R^N} \Ga (y) | \gamma(w^k(\cdot - z_n^k)) |^q \, dy \to 0.
\end{equation*}
All details can be found in \cite{BieganowskiMederski}. Hence
\begin{equation*}
\cE_{\mathrm{per}}(v_n - v_0) = \sum_{k=1}^\ell \cE_{\mathrm{per}} (w^k) + o (1)
\end{equation*}
and the proof is finished.
\end{proof}
%

%
%
%
\section{Proof of Theorem \ref{th:main}}\label{sect:5}

Define $c_{\mathrm{per}} := \inf_{\cN_{\mathrm{per}}} \cE_{\mathrm{per}} > 0$, where
\begin{equation}\label{Nper}
\cN_{\mathrm{per}} := \{ v \in H \setminus \{0\} \mid \cE_{\mathrm{per}}'(v)(v) = 0 \}.
\end{equation}

\begin{altproof}{Theorem \ref{th:main}}
From Lemma \ref{lem:splitting} (v) we obtain
\begin{equation}\label{q}
c = \cE(v_0) + \sum_{k=1}^\ell \cE_{\mathrm{per}} (w^k) \geq \cE(v_0) + \ell c_{\mathrm{per}}.
\end{equation}
\begin{enumerate}
\item[(a)] Assume that $V_l = 0$. Then $\cE = \cE_{\mathrm{per}}$, $c = c_{\mathrm{per}}$ and from \eqref{q} there holds
$$
c \geq \cE (v_0) + \ell c.
$$
If $v_0 \neq 0$, we get $c \geq (\ell + 1) c$, $\ell = 0$ and $v_0$ is a ground state. If $v_0 = 0$ we have $c \geq \ell c$. Since $c > 0$ then $\ell = 1$ and $w^1 \neq 0$ is a ground state.

\item[(b)] Assume that $V_l(y) < 0$ for a.e. $y \in \R^N$. Suppose that $v_0 = 0$ and $\cE(v_0) = \cE(0) = 0$. From (a) we already know that there is $v_{\mathrm{per}} \in \cN_{\mathrm{per}}$ such that
\begin{equation*}
c_{\mathrm{per}} = \cE_{\mathrm{per}} ( v_{\mathrm{per}} ) > 0.
\end{equation*}
Let $t_{\mathrm{per}} > 0$ be a number such that $t_{\mathrm{per}} v_{\mathrm{per}} \in \cN$. The inequality $V(y) < V_p (y)$ for a.e. $y \in \R^N$ implies that
\begin{equation}\label{cper}
c_{\mathrm{per}} = \cE_{\mathrm{per}} ( v_{\mathrm{per}} ) \geq \cE_{\mathrm{per}} ( t_{\mathrm{per}} v_{\mathrm{per}} ) > \cE ( t_{\mathrm{per}} v_{\mathrm{per}} ) \geq \inf_{\cN} \cE = c > 0.
\end{equation}
From \eqref{q} and \eqref{cper} we get the inequality
\begin{equation*}
c > \ell c.
\end{equation*}
Therefore $\ell = 0$ and $c = \cE(0) = 0$, a contradiction. Hence $v_0 \neq 0$ is a ground state solution.
\end{enumerate}
The regularity of $u$ can be shown by adapting \cite[Lemma 5.1 and Theorem 7.1]{Cingolani}.
\end{altproof}

\section{Proof of Theorem \ref{th:main2}}\label{sect:6}

Let $\cE_{\mathrm{per}}$ and $\cN_{\mathrm{per}}$ be given by \eqref{Eper} and \eqref{Nper} respectively. Suppose by contradiction that there is a ground state $v_0 \in \cN$ of $\cE$. In view of Remark \ref{rem-nehari} there is $t_{\mathrm{per}} > 0$ such that $t_{\mathrm{per}}v_0 \in \cN_{\mathrm{per}}$. Since $v_0 \neq 0$ and $V_l (y) > 0$ for a.e. $y \in \R^N$ we have
\begin{equation*}
\int_{\R^N} V_l (y) |\gamma(v_0)|^2 \, dy > 0
\end{equation*}
and therefore
\begin{equation*}
c_{\mathrm{per}} := \inf_{ \cN_{\mathrm{per}} } \cE_{\mathrm{per}} \leq \cE_{\mathrm{per}} ( t_{\mathrm{per}}v_0 ) < \cE ( t_{\mathrm{per}}v_0) \leq \cE(v_0) = \inf_{\cN} \cE =: c.
\end{equation*}
On the other hand, take any $v \in \cN_{\mathrm{per}}$ and for any $z \in \mathbb{Z}^{N}$ let $v_z (x,y) := v(x, y - z)$. Let $t_z > 0$ be a number such that $t_z v_z \in \cN$. Then
\begin{multline*}
\cE_{\mathrm{per}}(v) = \cE_{\mathrm{per}}(v_z) \geq \cE_{\mathrm{per}}(t_z v_z) \\
= \cE (t_z v_z) - \int_{\R^N} V_l(y) | \gamma (t_z v_z) |^2 \, dy \geq c - \int_{\R^N} V_l(y) | \gamma (t_z v_z) |^2 \, dy.
\end{multline*}
Observe that
\begin{equation*}
\int_{\R^N} V_l(y) | \gamma (t_z v_z) |^2 \, dy = t_z^2 \int_{\R^N} V_l(y) | \gamma (v_z) |^2 \, dy = t_z^2 \int_{\R^N} V_l(y+z) | \gamma (v) |^2 \, dy.
\end{equation*}
In view of (V3) we have by Lebesgue's Theorem
\begin{equation*}
\int_{\R^N} V_l(y+z) | \gamma (v) |^2 \, dy \to 0 \quad \mathrm{as} \ |z|\to\infty.
\end{equation*}
The functional $\mathcal{E}_{\mathrm{per}}$ being coercive on $\mathcal{N}_{\mathrm{per}}$, $\cE_{\mathrm{per}}(t_z v_z) = \cE_{\mathrm{per}}(t_z v) \leq c_{\mathrm{per}}$ implies that  $\sup_{z \in \mathbb{Z}^N} t_z < +\infty$. Hence
\begin{equation*}
\cE_{\mathrm{per}}(v) \geq c - \int_{\R^N} V_l(y) | \gamma (t_z v_z) |^2 \, dy = c - t_z^2 \int_{\R^N} V_l(y+z) | \gamma (v) |^2 \, dy \to c.
\end{equation*}
Taking infimum over all $v \in \cN_{\mathrm{per}}$ we obtain $c_{\mathrm{per}} \geq c$, a contradiction. This concludes the proof.

\section{Proof of Theorem \ref{th:main3}}\label{sect:7}

Suppose that $\{ \Gamma_n \} \subset L^\infty (\R^N)$ is a sequence such that $\Gamma_n$ satisfies ($\Ga$) and $\Gamma_n \to 0$ in $L^\infty (\R^N)$ as $n \to +\infty$. Let $\cE_n$ denotes the Euler functional for $\Gamma = \Gamma_n$.

\begin{lemma}\label{lem:7.1}
There exists a positive radius $r > 0$ such that
\begin{gather*}
\inf_{n \geq 1} \inf_{\|v\| = r} \cE_n (u) > 0.
\end{gather*}
\end{lemma}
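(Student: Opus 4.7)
The plan is to exploit the favorable sign of the $\Gamma$--term. Since assumption $(\Gamma)$ forces $\Gamma_n \geq 0$ a.e.~and hence
\[
\frac{1}{q}\int_{\R^N} \Gamma_n(y)|\gamma(v)|^q \, dy \geq 0
\]
for every $v \in H$ and every $n$, the estimate
\[
\cE_n(v) = \frac{1}{2}\|v\|^2 - \frac{1}{2p}\mathscr{D}(v) + \frac{1}{q}\int_{\R^N} \Gamma_n(y)|\gamma(v)|^q \, dy \geq \frac{1}{2}\|v\|^2 - \frac{1}{2p}\mathscr{D}(v)
\]
is valid independently of $n$. The convergence hypothesis $\Gamma_n \to 0$ in $L^\infty(\R^N)$ plays no role here; what matters is only the non-negativity, which gives the same uniform bound one would get with $\Gamma_n \equiv 0$.

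Next I would invoke the Hardy--Littlewood--Sobolev bound \eqref{eq2} to replace $\mathscr{D}(v)$ with $C\|v\|^{2p}$, yielding
\[
\cE_n(v) \geq \frac{1}{2}\|v\|^2 - \frac{C}{2p}\|v\|^{2p} = \|v\|^2\left(\frac{1}{2} - \frac{C}{2p}\|v\|^{2p-2}\right).
\]
Since $p \geq 2$ by assumption $(N)$, choosing $r>0$ so small that $\frac{C}{2p}r^{2p-2} \leq \frac{1}{4}$ — for instance $r := (p/(2C))^{1/(2p-2)}$ — produces the bound $\cE_n(v) \geq r^2/4$ whenever $\|v\|=r$. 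The constant $C$ comes solely from the embeddings $H \hookrightarrow L^{2p}(\R^N)$ (via the trace) and the Hardy--Littlewood--Sobolev inequality; in particular it is independent of $n$, so the infimum of $\cE_n(v)$ over the sphere $\|v\|=r$ and over $n \geq 1$ is at least $r^2/4 > 0$.

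There is really no obstacle here: the lemma is essentially a reprise of condition (J1) verified earlier, made uniform in $n$ by noting that the only $n$-dependent piece of $\cE_n$ has a sign that helps rather than hurts. The genuine use of $\Gamma_n \to 0$ in $L^\infty$ will appear in subsequent lemmas when one needs to compare the ground state levels $c_n$ with the ground state level for $\Gamma = 0$, but for the present $r$-uniform estimate the mere non-negativity of $\Gamma_n$ suffices.
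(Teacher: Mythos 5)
Your proof is correct and follows essentially the same route as the paper: drop the non-negative $\Gamma_n$-term, bound $\mathscr{D}(v)$ by $C\|v\|^{2p}$ via \eqref{eq2}, and pick $r$ small enough that the nonlinear part is at most $\tfrac14\|v\|^2$ on the sphere, with all constants independent of $n$. Your observation that only the sign of $\Gamma_n$, not the convergence $\Gamma_n\to 0$, is used here matches the paper exactly.
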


\begin{proof}
In view of \eqref{eq2}
\[
\frac{1}{2p} \mathscr{D}(v) - \frac{1}{q} \int_{\R^N} \Gamma_n (y) | \gamma(v) |^q \, dy \leq \frac{1}{2p} \mathscr{D}(v) \leq C \| v\|^{2p} = C \|v\|^{2p-2} \|v\|^2.
\]
For $v \in H$ such that $\|v\| \leq \left( \frac{1}{4 C } \right)^{1/(2p-2)} =: r$ we have
\[
\frac{1}{2p} \mathscr{D}(v) - \frac{1}{q} \int_{\R^N} \Gamma_n (y) | \gamma(v) |^q \, dy  \leq \frac{1}{4} \|v\|^2.
\]
Hence $\cE_n(v) \geq \frac{1}{4}\|v\|^2 = \frac{r^2}{4} > 0$ for $\|v\| = r$.
\end{proof}

Recall that for any $n \geq 1$ there is $v_n \in H$ such that $\cE_n ' (v_n) = 0$ and $\cE_n (v_n) = \inf_{\cN_n} \cE_n$, where $\cN_n$ is the corresponding Nehari manifold. By $\cE_0$ and $\cN_0$ we denote the Euler functional and the Nehari manifold for $\Gamma \equiv 0$. Recall that there is also a ground state $v_0 \in H$ for $\cE_0$. We define
\begin{gather*}
c_n := \cE_n (v_n) = \inf_{\cN_n} \cE_n, \quad c_0 := \cE_0 (v_0) = \inf_{\cN_0} \cE_0.
\end{gather*}

\begin{lemma}\label{lem:7.2}
There holds
\begin{equation*}
\lim_{n \to \infty} c_n = c_0.
\end{equation*}
\end{lemma}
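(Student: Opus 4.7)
The strategy is a standard two-sided comparison: establish separately that $\limsup_{n \to \infty} c_n \leq c_0$ and that $\liminf_{n \to \infty} c_n \geq c_0$, each obtained by transporting the competitor in the Nehari manifold of one functional onto the Nehari manifold of the other via the unique scaling factor provided by Remark \ref{rem-nehari}.

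For the upper bound I would start with the ground state $v_0 \in \cN_0$ of $\cE_0$ and choose $t_n > 0$ such that $t_n v_0 \in \cN_n$. Writing out the Nehari identity, $t_n$ is characterized by
\[
\|v_0\|^2 = t_n^{2p-2} \mathscr{D}(v_0) - t_n^{q-2} \int_{\R^N} \Gamma_n(y) |\gamma(v_0)|^q \, dy,
\]
and since $|\Gamma_n|_\infty \to 0$ and the map $t \mapsto t^{2p-2} \mathscr{D}(v_0) - t^{q-2} \int \Gamma_n |\gamma(v_0)|^q$ is strictly monotone increasing in the relevant range (as $2p - 2 > q - 2 \geq 0$ by (N)), a continuity argument yields $t_n \to 1$. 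Then
\[
c_n \leq \cE_n(t_n v_0) = \cE_0(t_n v_0) + \tfrac{1}{q} \int_{\R^N} \Gamma_n(y) |\gamma(t_n v_0)|^q \, dy \longrightarrow \cE_0(v_0) = c_0,
\]
using $|\Gamma_n|_\infty \to 0$ and the boundedness of $|\gamma(t_n v_0)|_q$.

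For the lower bound I would first show a uniform bound on $\{v_n\}$. Coercivity (J4) holds uniformly in $n$ because the estimate in the proof gives $\cE_n(v_n) \geq (\tfrac12 - \tfrac1q) \|v_n\|^2$ on $\cN_n$, independently of $\Gamma_n$; combined with the already established $c_n \leq c_0 + o(1)$ this yields $\sup_n \|v_n\| < +\infty$. A separate lower bound $\|v_n\| \geq \rho > 0$ follows from the uniform mountain-pass geometry in Lemma \ref{lem:7.1}. The Nehari identity for $v_n$ then forces $\mathscr{D}(v_n) \geq \|v_n\|^2 \geq \rho^2$, so $\mathscr{D}(v_n)$ is bounded above and below. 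Next choose $s_n > 0$ so that $s_n v_n \in \cN_0$; the Nehari relation $s_n^{2p-2} = \|v_n\|^2 / \mathscr{D}(v_n)$ shows $s_n$ is bounded above and below. Since $v_n$ maximizes $t \mapsto \cE_n(t v_n)$ over $t \geq 0$,
\[
c_0 \leq \cE_0(s_n v_n) = \cE_n(s_n v_n) + \tfrac{1}{q} \int_{\R^N} \Gamma_n(y) |\gamma(s_n v_n)|^q \, dy \leq \cE_n(v_n) + o(1) = c_n + o(1),
\]
where the error term vanishes because $|\Gamma_n|_\infty \to 0$ and $|\gamma(s_n v_n)|_q$ is bounded.

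The main obstacle is the uniform control of the scalings $t_n$ and $s_n$, for which one needs both the uniform geometric estimate of Lemma \ref{lem:7.1} (to prevent $\|v_n\|$ from collapsing to $0$) and the uniform coercivity on $\cN_n$ (to prevent $\|v_n\|$ from blowing up); everything else is a direct perturbation computation using $|\Gamma_n|_\infty \to 0$.
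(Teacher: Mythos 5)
Your proposal is correct and follows essentially the same route as the paper: a two-sided squeeze obtained by projecting each ground state onto the other Nehari manifold via Remark \ref{rem-nehari} and controlling the scaling factors through the Nehari identity and the condition $q<2p$. Two minor remarks only: the display $\cE_0(s_n v_n)=\cE_n(s_n v_n)+\tfrac{1}{q}\int_{\R^N}\Gamma_n|\gamma(s_n v_n)|^q\,dy$ has the wrong sign (it should be a minus, which only helps since $\Gamma_n\geq 0$), and you do not actually need $t_n\to 1$ in the upper bound, since $\cE_0(t_n v_0)\leq\cE_0(v_0)$ holds automatically because $v_0$ maximizes $\tau\mapsto\cE_0(\tau v_0)$; boundedness of $t_n$ suffices, which is exactly the one estimate the paper isolates (for its $s_n$) via a contradiction argument.
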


\begin{proof}
Take $t_n > 0$ such that $t_n v_n \in \cN_0$ and note that
\begin{equation}\label{7-1}
c_n \geq \cE_n (t_n v_n) = \cE_0 (t_n v_n) + \frac{t_n^q}{q} \int_{\R^N} \Ga_n(y)|\gamma(v_n)|^q \, dx  \geq c_0 + \frac{t_n^q}{q} \int_{\R^N} \Ga_n(y)|\gamma(v_n)|^q \, dx.
\end{equation}
Similarly, taking $s_n > 0$ such that $s_n v_0 \in \cN_n$ we get
\begin{equation}\label{7-2}
c_0 \geq \cE_0 (s_n v_0) = \cE_n (s_n v_0) - \frac{s_n^q}{q} \int_{\R^N} \Ga_n(y) | \gamma(v_0)|^q \, dy \geq c_n - \frac{s_n^q}{q} \int_{\R^N} \Ga_n(y) | \gamma(v_0)|^q \, dy.
\end{equation}
Combining \eqref{7-1} and \eqref{7-2} we arrive at
\begin{gather*}
c_0 \leq c_0 + \frac{t_n^q}{q} \int_{\R^N} \Ga_n(y)|\gamma(v_n)|^q \, dx \leq c_n \leq c_0 + \frac{s_n^q}{q} \int_{\R^N} \Ga_n(y) | \gamma(v_0)|^q \, dy.
\end{gather*}
Since obviously
\begin{gather*}
\int_{\R^N} \Ga_n(y) | \gamma(v_0)|^q \, dy \leq |\Ga_n|_\infty \int_{\R^N} | \gamma(v_0)|^q \, dy \to 0,
\end{gather*}
it is sufficient to show that $(s_n)$ is bounded. Suppose by contradiction that $s_n \to \infty$. Taking into account that $s_n v_0 \in \cN_n$ we have
\begin{gather*}
s_n^2 \|v_0\|^2 - \mathscr{D}'(s_n v_0) (s_n v_0) + s_n^q \int_{\R^N} \Ga_n(y) |\gamma(v_0)|^q \, dy = 0.
\end{gather*}
Hence, recalling that $q<2p$,
\begin{align*}
0 &= \frac{\|v_0\|^2}{s_n^{q-2}} - \frac{\mathscr{D}'(s_n v_0) (s_n v_0)}{s_n^q} + \int_{\R^N} \Ga_n(y) |\gamma(v_0)|^q \, dy
\\ &= o(1) - \frac{\mathscr{D}'(s_n v_0) (s_n v_0)}{s_n^q}  \to -\infty \quad \text{as} \ n \to +\infty,
\end{align*}
thus we get a contradiction.
\end{proof}

\begin{lemma}\label{lem:7.3}
For every choice of ground states $v_n$ of $\cE_n$, the sequence $\{ v_n \}_n$ is bounded in $H$.
\end{lemma}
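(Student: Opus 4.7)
The plan is to obtain a uniform-in-$n$ coercivity estimate on the Nehari manifolds $\cN_n$, mirroring the verification of condition (J4) carried out in Section~\ref{sect:3}, and then combine it with the convergence $c_n \to c_0$ from Lemma~\ref{lem:7.2}.

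First, I would exploit the Nehari identity. Since $v_n \in \cN_n$ means $\cE_n'(v_n)(v_n) = 0$, and since $\mathscr{D}$ is $2p$-homogeneous so that $\mathscr{D}'(v_n)(v_n) = 2p\,\mathscr{D}(v_n)$, the identity $\cE_n'(v_n)(v_n)=0$ rearranges to
\[
\int_{\R^N} \Gamma_n(y)\,|\gamma(v_n)|^q \, dy \;=\; \mathscr{D}(v_n) - \|v_n\|^2.
\]
Substituting this into $\cE_n(v_n) = \frac{1}{2}\|v_n\|^2 - \frac{1}{2p}\mathscr{D}(v_n) + \frac{1}{q}\int_{\R^N}\Gamma_n(y)|\gamma(v_n)|^q\,dy$ yields the key expression
\[
c_n \;=\; \cE_n(v_n) \;=\; \left(\frac{1}{2} - \frac{1}{q}\right)\|v_n\|^2 + \left(\frac{1}{q} - \frac{1}{2p}\right)\mathscr{D}(v_n),
\]
which is the same formula used to prove (J4), except now with the coefficients independent of $n$ because the $\Gamma_n$-dependence has been absorbed.

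Next, I would read off the bound. Assumption (N) gives $2 < q < 2p$, so both factors $\frac{1}{2}-\frac{1}{q}$ and $\frac{1}{q}-\frac{1}{2p}$ are strictly positive, and Lemma~\ref{phiProperties}~(iii) together with the Hardy--Littlewood--Sobolev inequality ensures $\mathscr{D}(v_n) \geq 0$. Consequently,
\[
\|v_n\|^2 \;\leq\; \left(\frac{1}{2} - \frac{1}{q}\right)^{-1} c_n.
\]
Finally I would invoke Lemma~\ref{lem:7.2}, which states $c_n \to c_0$ and therefore keeps $\{c_n\}_n$ bounded. The displayed inequality then shows that $\{\|v_n\|\}_n$ is bounded, proving the lemma.

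There is no real obstacle here: the argument is a uniform restatement of the coercivity already established for a single $\Gamma$. The only point worth emphasising is that the non-negativity $\Gamma_n \geq 0$ and the upper bound $q < 2p$ imposed in ($\Gamma$) and (N) are precisely what guarantees both coefficients above have the correct sign, uniformly in $n$, so no auxiliary estimate on $|\Gamma_n|_\infty$ or on $\int \Gamma_n|\gamma(v_n)|^q\,dy$ is needed.
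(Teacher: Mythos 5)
Your proposal is correct and follows essentially the same route as the paper: both arguments reduce to the identity $\cE_n(v_n)=\cE_n(v_n)-\tfrac1q\cE_n'(v_n)(v_n)=\bigl(\tfrac12-\tfrac1q\bigr)\|v_n\|^2+\bigl(\tfrac1q-\tfrac{1}{2p}\bigr)\mathscr{D}(v_n)$ with both coefficients positive and $\mathscr{D}(v_n)\geq 0$, then invoke the boundedness of $c_n$ from Lemma~\ref{lem:7.2}. The only cosmetic difference is that you extract an explicit bound $\|v_n\|^2\leq\bigl(\tfrac12-\tfrac1q\bigr)^{-1}c_n$ directly, while the paper phrases the same computation as a proof by contradiction.
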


\begin{proof}
Suppose by contradiction that $\|v_n\| \to \infty$. Then
\begin{align*}
c_0 &= \lim_{n\to+\infty} \cE_n (v_n) = \lim_{n\to+\infty} \left( \cE_n (v_n) - \frac{1}{q} \cE_n ' (v_n)(v_n) \right) \\
&= \lim_{n\to+\infty} \left( \left( \frac{1}{2} - \frac{1}{q} \right) \|v_n\|^2 + \left( \frac{1}{q} - \frac{1}{2p} \right) \mathscr{D}'(v_n)(v_n) \right) \\
&\geq \lim_{n\to+\infty} \left( \frac{1}{2} - \frac{1}{q} \right) \|v_n\|^2 = + \infty
\end{align*}
and we obtain a contradiction.
\end{proof}

\begin{altproof}{Theorem \ref{th:main3}}
Suppose that
\begin{gather*}
\lim_{n\to+\infty} \sup_{z \in \R_+^{N+1}} \iint_{B(z,1)} |v_n|^2 \, dx \, dy = 0.
\end{gather*}
From Lion's concentration-compactness principle we obtain
\begin{gather*}
v_n \to 0 \ \mbox{in} \ L^{2(t-1)} (\R_+^{N+1}), \quad \gamma(v_n) \to 0 \ \mbox{in} \ L^t (\R^N) \quad \mbox{for all } t \in \left(2, \frac{2N}{N-1} \right).
\end{gather*}
Then, as in Lemma \ref{lem3.3} we get $\|v_n\| \to 0$. In view of Lemma \ref{lem:7.1} we have
\begin{gather*}
\cE_n(v_n) \geq \cE_n \left(r \frac{v_n}{\|v_n\|} \right) \geq a > 0
\end{gather*}
and on the other hand, in view of Lemma \ref{lem:7.3}
\begin{gather*}
\limsup_{n\to\infty} \cE_n (v_n) = - \frac{1}{2p} \limsup_{n\to\infty} \mathscr{D}(v_n) \leq 0,
\end{gather*}
a contradiction. Hence, there is a sequence $(z_n) = (x_n, y_n) \subset \mathbb{Z}_+^{N+1}$ such that
\begin{gather*}
\liminf_{n \to +\infty} \iint_{B(z_n, 1+\sqrt{N+1})} |v_n|^2 \, dx \,dy \geq \alpha
\end{gather*}
for some $\alpha > 0$. In view of Lemma \ref{lem:7.3}, there is $v \in H \setminus \{ 0 \}$ such that
\begin{align*}
v_n (\cdot + z_n) \to v \quad &\mbox{in} \ L^2_{\mathrm{loc}} (\R^{N+1}_+), \\
v_n (\cdot + z_n) \weakto v \quad &\mbox{in} \ H, \\
v_n (x+x_n, y+y_n) \to v(x,y) \quad &\mbox{for a.e.} \ (x,y) \in \R^{N+1}_+.
\end{align*}
Let $w_n := v_n(\cdot + z_n)$. Fix any $\varphi \in C_0^\infty (\R_+^{N+1})$. Observe that
\begin{align*}
\cE_0' (w_n)(\varphi) &= \cE_n ' (v_n)(\varphi(\cdot-z_n)) - \int_{\R^N} \Ga_n(y) |\gamma(v_n)|^{q-2} \gamma(v_n) \gamma(\varphi(\cdot-z_n)) \, dy \\
&= \int_{\R^N} \Ga_n(y) |\gamma(v_n)|^{q-2} \gamma(v_n) \gamma(\varphi(\cdot-z_n)) \, dy.
\end{align*}
We notice that
\begin{gather*}
\left| \int_{\R^N} \Ga_n(y) |\gamma(v_n)|^{q-2} \gamma(v_n) \gamma(\varphi(\cdot-z_n)) \, dy \right| \leq | \Gamma_n |_\infty |\gamma(w_n) |_q^{q-1} | \gamma(\varphi) |_q \to 0,
\end{gather*}
and therefore $\cE_0' (w_n)(\varphi) \to 0$. Repeating the reasoning from Step 1 in proof of Lemma \ref{lem:splitting} we can easily show that
\begin{gather*}
\cE_0' (w_n)(\varphi) \to \cE_0' (v)(\varphi)
\end{gather*}
and therefore $v$ is a nontrivial critical point of $\cE_0$. In view of Lemma \ref{lem:7.2} and Fatou's lemma we have
\begin{align}
c_0 &= \liminf_{n\to+\infty} \cE_n (v_n) = \liminf_{n\to+\infty} \left( \cE_n (v_n) - \frac{1}{2} \cE_n'(v_n)(v_n) \right) \nonumber \\
&\geq \liminf_{n\to+\infty} \left( \frac{1}{2} \frac{1}{2p} \mathscr{D}'(v_n)(v_n) - \frac{1}{2p} \mathscr{D}(v_n) \right) + \liminf_{n\to+\infty} \left( - \left( \frac{1}{2} - \frac{1}{q} \right) \int_{\R^N} \Gamma_n (y) | \gamma(v_n)|^q \, dy \right) \nonumber \\
&= \liminf_{n\to+\infty} \left( \frac{1}{2} \frac{1}{2p} \mathscr{D}'(v_n)(v_n) - \frac{1}{2p} \mathscr{D}(v_n) \right) = \liminf_{n\to+\infty}  \left(\frac{1}{2} - \frac{1}{2p}\right) \mathscr{D}(v_n) \nonumber \\
&\geq \left(\frac{1}{2} - \frac{1}{2p}\right) \mathscr{D}(v) = \frac{1}{2} \frac{1}{2p} \mathscr{D}'(v)(v) - \frac{1}{2p} \mathscr{D}(v ) \nonumber \\ &= \frac{1}{2} \frac{1}{2p} \mathscr{D}'(v)(v) - \frac{1}{2p} \mathscr{D}(v) + \frac{1}{2} \cE_0' (v)(v) = \cE_0(v) \geq c_0. \label{c0}
\end{align}
Hence $v$ is a ground state for $\cE_0$, in particular $\cE_0 (v) = c_0$. Thus it is sufficient to show that $w_n \to v$ in $H$. Observe that
\begin{align*}
\|w_n - v\|^2 &= \cE_n ' (v_n) (w_n(\cdot - z_n) - v(\cdot - z_n)) - \langle v, w_n - v \rangle \\
&\quad - \frac{1}{2p} \mathscr{D}'(w_n)(w_n-v) + \int_{\R^N} \Ga_n(y) |\gamma(w_n)|^{q-2} \gamma(w_n) \gamma(w_n - v) \, dy.
\end{align*}
We have that $\cE_n ' (v_n) (w_n(\cdot - z_n) - v(\cdot - z_n)) - \langle v, w_n - v \rangle = 0$ and $\langle v, w_n - v \rangle \to 0$. Thus
\begin{gather*}
\|w_n - v\|^2 = - \frac{1}{2p} \mathscr{D}'(w_n)(w_n-v) + \int_{\R^N} \Ga_n(y) |\gamma(w_n)|^{q-2} \gamma(w_n) \gamma(w_n - v) \, dy + o(1).
\end{gather*}
Moreover
\begin{align*}
\left| \int_{\R^N} \Ga_n(y) |\gamma(w_n)|^{q-2} \gamma(w_n) \gamma(w_n - v) \, dy \right| \leq |\Ga_n|_\infty |\gamma(w_n)|_q^{q-1} |w_n-v|_q \to 0
\end{align*}
and
\begin{gather*}
\|w_n - v\|^2 = - \frac{1}{2p} \mathscr{D}'(w_n)(w_n-v) + o(1).
\end{gather*}

From \eqref{c0} we get that
\begin{gather*}
\left(\frac{1}{2} - \frac{1}{2p}\right) \mathscr{D}(w_n) = \left(\frac{1}{2} - \frac{1}{2p}\right) \mathscr{D}(v_n) \to \left(\frac{1}{2} - \frac{1}{2p}\right) \mathscr{D}(v) = c_0.
\end{gather*}
In view of Lemma \ref{lem:brezis-D}
\begin{equation}\label{brezisLiebD-appl}
\left(\frac{1}{2} - \frac{1}{2p}\right) \mathscr{D}(w_n) - \left(\frac{1}{2} - \frac{1}{2p}\right) \mathscr{D}(w_n-v) \to \left(\frac{1}{2} - \frac{1}{2p}\right) \mathscr{D}(v) = c_0
\end{equation}
and therefore
\begin{align}\label{eq:dwv}
\mathscr{D}(w_n-v) \to 0.
\end{align}
In view of \eqref{eq:dwv} we also have
\begin{gather*}
\mathscr{D}'(w_n-v)(w_n-v) = \frac{1}{2p} \mathscr{D}(w_n-v) \to 0.
\end{gather*}
Then
\begin{gather*}
\mathscr{D}'(w_n)(w_n-v) = \mathscr{D}'(w_n)(w_n) - \mathscr{D}'(w_n)(v)
\end{gather*}
and from \eqref{brezisLiebD-appl} we have
\begin{gather*}
\mathscr{D}'(w_n)(w_n) \to \mathscr{D}'(v)(v).
\end{gather*}
Thus
\begin{gather*}
\mathscr{D}'(w_n)(w_n-v) = \mathscr{D}'(v)(v)- \mathscr{D}'(w_n)(v) + o(1).
\end{gather*}
From Lemma \ref{weakContDprime} we have $\mathscr{D}'(w_n)(v) \to \mathscr{D}'(v)(v)$ and therefore
\begin{gather*}
\|w_n - v \|^2 \to 0.
\end{gather*}
This completes the proof.
\end{altproof}

\section*{Acknowledgements}

Bartosz Bieganowski was partially supported by the National Science Centre, Poland (Grant No. 2017/25/N/ST1/00531). Simone Secchi is member of the \emph{Gruppo Nazionale per l'Analisi Ma\-te\-ma\-ti\-ca, la Probabilit\`a e le loro Applicazioni} (GNAMPA) of the {\em Istituto Nazionale di Alta Matematica} (INdAM).
The manuscript was realized within the auspices of the INdAM -- GNAMPA Projects
\emph{Problemi non lineari alle derivate parziali} (Prot\_U-UFMBAZ-2018-000384).

\end{document}